\numberwithin{equation}{section}
\numberwithin{figure}{section}
\theoremstyle{plain}
\newtheorem{thm}{\protect\theoremname}
\theoremstyle{plain}
\newtheorem{lem}[thm]{\protect\lemmaname}
\theoremstyle{plain}
\newtheorem{cor}[thm]{\protect\corollaryname}
\providecommand{\corollaryname}{Corollary}
\providecommand{\lemmaname}{Lemma}
\providecommand{\theoremname}{Theorem}
\begin{document}
\title{Coercive Inequalities and U-Bounds}
\author{E. Bou Dagher}
\address{Esther Bou Dagher: \newline Department of Mathematics \newline Imperial College London \newline 180 Queen's Gate, London SW7 2AZ \newline United Kingdom}
\email{esther.bou-dagher17@imperial.ac.uk}
\author{B. Zegarli\'{n}ski}
\address{Bogus\l{}aw Zegarli\'{n}ski: \newline Department of Mathematics \newline Imperial College London \newline 180 Queen's Gate, London SW7 2AZ \newline United Kingdom}
\email{b.zegarlinski@imperial.ac.uk}

\providecommand{\keywords}[1]{\textbf{\textit{Keywords---}} #1}

\begin{abstract}
We prove Poincaré and Log$^{\beta}$-Sobolev inequalities for probability
measures on step-two Carnot groups.

\tableofcontents{}

\end{abstract}
\keywords{Poincaré inequality, Logarithmic-Sobolev inequality, Carnot groups, sub-gradient, probability measures, Kaplan norm}

\maketitle

\section{Introduction}

\begin{onehalfspace}
Although the question of obtaining coercive inequalities such as the
Poincaré or the Logarithmic Sobolev inequalities for a probability
measure on a metric measure space has been a subject of numerous works,
the literature on this topic in the setup Carnot groups is scarce.

In \cite{key-5}, L. Gross obtained the following Logarithmic Sobolev
inequality:

\begin{equation}
\int_{\mathbb{R}^{n}}f^{2}log\left(\frac{f^{2}}{\int_{\mathbb{R}^{n}}f^{2}d\mu}\right)d\mu\leq2\int_{\mathbb{R}^{n}}\vert\triangledown f\vert^{2}d\mu,\label{eq:1}
\end{equation}
where ${\displaystyle \triangledown}$ is the standard gradient on
$\mathbb{R}^{n}$ and ${\displaystyle d\mu=\frac{e^{-\frac{\vert x\vert^{2}}{2}}}{Z}d\lambda}$
is the Gaussian measure.
\end{onehalfspace}

In a setup of a more general metric space, a natural question would
be to try to find similar inequalities with different measures of
the form ${\displaystyle d\mu=\frac{e^{-U\left(d\right)}}{Z}d\lambda},$
where $U$ is a function of a metric $d,$ and where the Euclidean
gradient is replaced by a more general sub-gradient in $\mathbb{R}^{n}.$

\begin{onehalfspace}
Aside from their theoretical importance, such inequalities are needed
because of their applications, some of which will be discussed briefly.
L.Gross also pointed out (\cite{key-5}) the importance of the inequality
(\ref{eq:1}) in the sense that it can be extended to infinite dimensions
with additional useful results. (See also works: \cite{key-13,key-3,key-16,key-14,key-12,key-11,key-15}.)
He proved that if $\mathcal{L}$ is the non-positive self-adjoint
operator on $L^{2}\left(\mu\right)$ such that
\[
\left(-\mathcal{L}f,f\right)_{L^{2}\left(\mu\right)}=\int_{\mathbb{R}^{n}}\vert\triangledown f\vert^{2}d\mu,
\]
then (\ref{eq:1}) is equivalent to the fact that the semigroup ${\displaystyle P_{t}=e^{t\mathcal{L}}}$
generated by $\mathcal{L}$ is hypercontractive: i.e. for $q\left(t\right)\leq1+\left(q-1\right)e^{2t}$
with $q>1$, we have ${\displaystyle \parallel P_{t}f\parallel_{q\left(t\right)}\leq\parallel f\parallel_{q}}$
for all $f\in L^{q}\left(\mu\right).$ (\cite{key-5})

In \cite{key-1}, D. Bakry and M. Emery extended the Logarithmic Sobolev
inequality for a larger class of probability measures defined on Riemaniann
manifolds under an important Curvature-Dimension condition. More generally,
if $\left(\Omega,F,\mu\right)$ a probability space, and $\mathcal{L}$
is a non-positive self-adjoint operator acting on $L^{2}\left(\mu\right),$
we say that the measure $\mu$ satisfies a Logarithmic Sobolev inequality
if there is a constant $c$ such that, for $f\in D\left(\mathcal{L}\right),$ 

\[
\int f^{2}log\frac{f^{2}}{\int f^{2}d\mu}d\mu\leq c\int f\left(-\mathcal{L}f\right)d\mu,
\]

In this general setting, the connection between this inequality and
the property of hypercontractivity was shown in \cite{key-5}. 

Another generalisation, the so-called q-Logarithmic Sobolev inequality,
in the setting of a metric measure space, was obtained by S. Bobkov
and M. Ledoux in \cite{key-2}, in the form: 

\[
\int f^{q}log\frac{f^{q}}{\int f^{q}d\mu}d\mu\leq c\int\vert\triangledown f\vert^{q}d\mu,
\]
where $q\in\left(1,2\right].$ Here, on a metric space, the magnitude
of the gradient is defined by

\[
\vert\triangledown f\vert\left(x\right)=\underset{d\left(x,y\right)\rightarrow0}{limsup}\frac{\vert f\left(x\right)-f\left(y\right)\vert}{d\left(x,y\right)}.
\]

In \cite{key-3}, S. Bobkov and B. Zegarli\'{n}ski showed that the
q-Logarithmic Sobolev inequality is better than the classical $q=2$
inequality in the sense that one gets a stronger decay of tail estimates.
In addition, when the space is finite, and under weak conditions,
they proved that the corresponding semigroup $P_{t}$ is ultracontractive
i.e. 

\[
\parallel P_{t}f\parallel_{\infty}\leq\parallel f\parallel_{p}
\]
for all $t\geq0$ and $p\in\left[1,\infty\right).$

We point out, that in \cite{key-9}, M. Ledoux made a connection between
the Logarithmic Sobolev inequality and the isoperimetric problem.
(See also: \cite{key-29,key-8,key-1})

The important q-Poincaré inequality

\[
\int\left|f-\int fd\mu\right|^{q}d\mu\leq c\int\vert\triangledown f\vert^{q}d\mu
\]

can be obtained from the q-Logarithmic Sobolev inequality by simply
replacing $f$ by $1+\varepsilon f$ in that inequality, and letting
$\varepsilon\rightarrow0.$

In this paper, our primary interest is to prove the existence of coercive
inequalities for different measures in the setting of step-two nilpotent
Lie groups, whose tangent space at every point is spanned by a family
of degenerate and non-commuting vector fields $\left\{ X_{i},i\in\mathcal{R},1<\vert\mathcal{R}\vert<\infty\right\} $,
where $\vert\mathcal{R}\vert$ is the cardinality of the index set
$\mathcal{R}.$ These inequalities, when satisfied, give us information
about the spectra of the associated generators of the form 
\begin{equation}
\mathcal{L}=\sum_{i\in R}X_{i}^{2}\label{eq:2}
\end{equation}
where $\vert\mathcal{R}\vert$ is strictly less than the dimension
of the space. (See also \cite{key-17} and references therein) 

Thus, by Hörmander's result in \cite{key-4}, the sub-Laplacian (\ref{eq:2})
is hypoelliptic; in other words, every distributional solution to
$\mathcal{L}u=f$ is of class $C^{\infty}$ whenever f is of class
$C^{\infty}.$

We point out that, according to \cite{key-10}, if we have a uniqueness
of solution in the space of square integrable functions for the Cauchy
problem
\[
\left\{ \begin{array}{ll}
\frac{\partial u}{\partial t}=\mathcal{L}u\\
u\vert_{t=0}=f,
\end{array}\right.
\]
then the solution of the heat equation will be given by ${\displaystyle u=P_{t}f.}$ 

In the setting of step-two nilpotent Lie groups, since the Laplacian
is of Hörmander type and has some degeneracy, D. Bakry and M. Emery\textquoteright s
Curvature-Dimension condition in \cite{key-1} will no longer hold
true. In \cite{key-6}, a method of studying coercive inequalities
on general metric spaces that does not require a bound on the curvature
of space was developed. Working on a general metric space equipped
with non-commuting vector fields $\{X_{1},\ldots,X_{n}\},$ their
method is based on U-bounds, which are inequalities of the form:

\[
\int f^{q}\mathcal{U}\left(d\right)d\mu\leq C\int\vert\triangledown f\vert^{q}d\mu+D\int f^{q}d\mu
\]
where ${\displaystyle d\mu=\frac{e^{-U\left(d\right)}}{Z}d\lambda}$
is a probability measure, $U(d)$ and $\mathcal{U}\left(d\right)$
are functions having a suitable growth at infinity, $\lambda$ is
a natural measure like the Lebesgue measure for instance (which is
the Haar measure for nilpotent Lie groups), $d$ is a metric related
to the gradient ${\displaystyle \triangledown=\left(X_{1},\ldots,X_{n}\right),}$
and $q\in\left(1,\infty\right).$ 

It is worth mentioning that in the setting of nilpotent Lie groups,
heat kernel estimates were studied to get a variety of coercive inequalities
(\cite{key-19,key-20,key-21,key-32,key-23,key-25,key-26,key-27,key-28,key-24}).
In our setting, we study coercive inequalities involving sub-gradients
and probability measures on the group which is a difficult and much
less explored subject. An approach, pioneered in \cite{key-6}, was
used by J. Inglis to get Poincaré inequality in the setting of the
Heisenberg-type group with measure as a function of Kaplan distance
\cite{key-7} and by M. Chatzakou et al. to get Poincaré inequality
in the setting of the Engel-type group with a measure as a function
of some homegenous norm \cite{key-30}. 
\end{onehalfspace}

In section 2 we define the step-two Carnot group, and introduce $N,$
the homogeneous norm we are working with, that is of the form of the
Kaplan norm in the Heisenberg-type group. Section 3 contains the main
theorem, which is a proof of a U-Bound of the form
\[
\int\frac{g'\left(N\right)}{N^{2}}\vert f\vert^{q}d\mu\leq C\int\vert\triangledown f\vert^{q}d\mu+D\int\vert f\vert^{q}d\mu,
\]
where $g(N)$ satisfies some growth conditions. In section 4, we apply
this U-bound together with some results of \cite{key-6} to get the
q-Poincaré inequality with $q\geq2$ for the measures ${\displaystyle d\mu=\frac{e^{-g(N)}}{Z}d\lambda.}$
This generalises the result by J. Inglis \cite{key-7} who, in the
setting of the Heisenberg-type group, proved the q-Poincaré inequality
for the measure ${\displaystyle d\mu=\frac{e^{-\alpha\tilde{N}^{p}}}{Z}d\lambda,}$
where $p\geq2,$ $\alpha>0,$ $q$ is the finite index conjugate to
$p,$ and with $\tilde{N}$ the Kaplan norm. In section 5, we extend
J. Inglis et al.'s Theorem 2.1 \cite{key-8} who proved a Log$^{\beta}-$Sobolev,
$\beta\in(0,1)$ inequality in the context of the Heisenberg group.
Recall that for density defined with a smooth homogenous norm, $\beta=1$
is not allowed (\cite{key-6}). We extend the corresponding results
to a $\phi-$Logarithmic Sobolev inequality, where $\phi$ is concave,
on step-two Carnot groups. Finally, we utilise the U-Bound to get
to a Log$^{\beta}-$Sobolev inequality for $d\mu=\frac{e^{-\alpha N^{p}}}{Z}d\lambda,$
where $p\geq4,$ $q\geq2,$ and $0<\beta\leq{\displaystyle \frac{p-3}{p}},$
indicating also no-go zone of parameters where the corresponding inequality
fails. 

\section{Setup}

Carnot groups are geodesic metric spaces that appear in many mathematical
contexts like harmonic analysis in the study of hypoelliptic differential
operators (\cite{key-1,key-10}) and in geometric measure theory (see
extensive reference list in the survey paper \cite{key-2}). 

We will be working in the setting of $\mathbb{G},$ a step-two Carnot
group, i.e. a group isomorphic to ${\displaystyle \mathbb{R}^{n+m}}$
with the group law
\[
\left(x,z\right)\circ\left(x',z'\right)=\left(x_{i}+x'_{i},~z_{j}+z'_{j}+\frac{1}{2}<\Lambda^{\left(j\right)}x,x'>\right)_{i=1,..,n;j=1,..,m}
\]
for $x,x'\in\mathbb{R}^{n},z,z'\in\mathbb{R}^{m}$, where the matrices
$\Lambda^{\left(j\right)}$ are $n\times n$ skew-symmetric and linearly
independent and $<.,.>$ stands for the inner producr on $\mathbb{R}^{n}$.
One can verify that $(\mathbb{R}^{n+m},\circ)$ is a Lie group whose
identity is the origin and where the inverse is given by $(x,z)^{-1}=-(x,z).$

The dilation
\[
\delta_{\lambda}:{\displaystyle \mathbb{R}^{n+m}}\rightarrow{\displaystyle \mathbb{R}^{n+m}},\;\;\;\;\;\;\delta_{\lambda}(x,z)=(\lambda x,\lambda^{2}z)
\]

is an automorphism of $({\displaystyle \mathbb{R}^{n+m}},\circ)$
for any $\lambda>0.$ Then, $\mathbb{G}=(\mathbb{R}^{n+m},\circ,\delta_{\lambda})$
is a homogeneous Lie group.

The Jacobian matrix at $(0,0)$ of the left translation $\tau_{(x,z)}$
i.e the map
\[
{\displaystyle \mathbb{G}\ni(x',z')\rightarrow\tau_{(x,z)}((x',z')):=(x,z)\circ(x',z')\in\mathbb{G}}
\]
for fixed $(x,z)\in\mathbb{G}$) takes the following form
\begin{align*}
\mathcal{J}_{\tau_{(x,z)}}(0,0) & =\left(\begin{array}{ccc|c}
 & \mathbb{I}_{n} &  & 0_{n\times m}\\
\hline {\displaystyle \frac{1}{2}\sum_{l=1}^{n}\Lambda_{1l}^{\left(1\right)}x_{l}} & \cdots & \frac{1}{2}\sum_{l=1}^{n}\Lambda_{nl}^{\left(1\right)}x_{l}\\
\vdots & \cdots & \vdots & \mathbb{I}_{m}\\
\frac{1}{2}\sum_{l=1}^{n}\Lambda_{1l}^{\left(m\right)}x_{l} & \cdots & \frac{1}{2}\sum_{l=1}^{n}\Lambda_{nl}^{\left(m\right)}x_{l}
\end{array}\right).
\end{align*}

Then, the Jacobian basis of $\mathfrak{g}$, the Lie algebra of $\mathbb{G},$
is given by

\[
X_{i}=\frac{\partial}{\partial x_{i}}+\frac{1}{2}\sum_{k=1}^{m}\sum_{l=1}^{n}\Lambda_{il}^{\left(k\right)}x_{l}\frac{\partial}{\partial z_{k}}\qquad\textrm{ and }\qquad Z_{j}=\frac{\partial}{\partial z_{j}},
\]
for $i\in\left\{ 1,\ldots,n\right\} $ and $j\in\left\{ 1,\ldots,m\right\} .$

Let $\nabla\equiv(X_{i})_{i=1,..,n}$ and $\ensuremath{}\Delta\equiv{\displaystyle {\displaystyle }\sum_{i=1,..,n}X_{i}^{2}}$
denote the associated sub-gradient and sub-Laplacian, respectively.
We consider the following smooth homogeneous norm on $\ensuremath{\mathbb{G}}$
\[
N\equiv\left(|x|^{4}+a|z|^{2}\right)^{\frac{1}{4}}
\]
with $a\in(0,\infty).$

The motivation behind choosing such a norm is that in the setting
of the Heisenberg-type groups (where we assume in addition that $\Lambda^{\left(j\right)}$
are orthogonal matrices and that $\Lambda^{\left(i\right)}\Lambda^{\left(j\right)}=-\Lambda^{\left(j\right)}\Lambda^{\left(i\right)}$
for every $i,j\in\{1,..,m\}$ with $i\neq j$), $N\equiv\left(|x|^{4}+16|z|^{2}\right)^{\frac{1}{4}}$
is the Kaplan norm which arises from the fundamental solution of the
sub-Laplacian. In other words, $\triangle N^{2-n-2m}=0$ in $\mathbb{G}\backslash\{0\}.$
Recall that J. Inglis, in \cite{key-7}, proved the q-Poincaré inequality
in the setting of the Heisenberg-type group for the measure ${\displaystyle d\mu=\frac{e^{-\alpha N^{p}}}{Z}d\lambda,}$
where $p\geq2,$ $\alpha>0,$ $q$ is the finite index conjugate to
$p$, and $N\equiv\left(|x|^{4}+16|z|^{2}\right)^{\frac{1}{4}}$ .
We extend, giving a simpler proof, the result of J. Inglis by obtaining
a q-Poincaré inequality in the setting of step-two Carnot groups for
the measures ${\displaystyle d\mu=\frac{e^{-g(N)}}{Z}d\lambda,}$
with ${\displaystyle \frac{g'(N)}{N^{2}}}$ an increasing function,
$q\geq2,$ and where $N\equiv\left(|x|^{4}+a|z|^{2}\right)^{\frac{1}{4}}.$ 

Our first key result this paper is obtaining the following U-Bound
(section 3):
\begin{equation}
\int\frac{g'\left(N\right)}{N^{2}}\vert f\vert^{q}d\mu\leq C\int\vert\triangledown f\vert^{q}d\mu+D\int\vert f\vert^{q}d\mu,\label{eq:u}
\end{equation}

under certain growth conditions for $g(N).$ This U-bound is a useful
tool to get a q-Poincaré inequality (section 4) and a Log$^{\beta}$-Sobolev
inequality (section 5) for $q\geq2.$ We expect that this U-bound
can be used to extend those coercive inequalities to (nonproduct)
measures in an infinite dimensional setting. \cite{key-67}

\section{U-Bound}
\begin{thm}
Let $N=\left(|x|^{4}+a|z|^{2}\right)^{\frac{1}{4}}$ with $a\in(0,\infty),$
and let $g:\left[0,\infty\right)\rightarrow\left[0,\infty\right)$
be a differentiable increasing function such that $g''(N)\leq g'(N)^{3}N^{3}$
on $\{N\geq1\}$. Let ${\displaystyle d\mu=\frac{e^{-g\left(N\right)}}{Z}d\lambda}$
be a probability measure, where $Z$ is the normalization constant.
Then, given $q\ensuremath{\geq2},$
\[
\int\frac{g'\left(N\right)}{N^{2}}\vert f\vert^{q}d\mu\leq C\int\vert\triangledown f\vert^{q}d\mu+D\int\vert f\vert^{q}d\mu
\]
holds for all locally Lipschitz functions $f,$ supported outside
the unit ball $\left\{ N<1\right\} ,$ with $C$ and $D$ positive
constants independent of $f.$
\end{thm}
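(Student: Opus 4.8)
The plan is to run an integration-by-parts scheme against the Gibbs density, exploiting that each horizontal field $X_i$ is divergence-free for Lebesgue measure, so that for locally Lipschitz $u,v$ with adequate decay
\[
\int (X_i u)\,v\,d\mu = -\int u\,(X_i v)\,d\mu + \int u\,v\,g'(N)\,(X_i N)\,d\mu .
\]
First I would record the elementary identities that drive everything. Writing $N^4=|x|^4+a|z|^2$ gives $X_iN=\bigl(|x|^2x_i+\tfrac{a}{4}w_i\bigr)/N^3$ with $w_i:=\sum_{k,l}\Lambda^{(k)}_{il}x_lz_k$, whence, by skew-symmetry of the $\Lambda^{(k)}$, the two clean relations $\sum_i x_i\,X_iN=|x|^4/N^3$ and $X_i(|z|^2)=w_i$, together with $\Delta(|z|^2)=\tfrac12\sum_k|\Lambda^{(k)}x|^2$, the commutator rule $[X_i,X_j]=-\sum_k\Lambda^{(k)}_{ij}Z_k$ (with $Z_k=\partial_{z_k}$), and the crude bounds $|\nabla N|\le C|x|/N\le C$ and $|z|\le N^2/\sqrt a$. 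On the support $\{N\ge1\}$ every negative power of $N$ is bounded, which is where the support hypothesis enters and where the growth condition is allowed to be used.

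Next I would split the target weight as
\[
\frac{g'(N)}{N^2}=\frac{g'(N)|x|^4}{N^6}+\frac{g'(N)\,a|z|^2}{N^6}
\]
and treat the two pieces separately. The horizontal piece is the easy one: applying the integration-by-parts formula to the vector field $F_i=x_i/N^3$ and using $\sum_i x_iX_iN=|x|^4/N^3$ produces exactly $\int \frac{g'(N)|x|^4}{N^6}|f|^q\,d\mu$, while $\mathrm{div}\,F=nN^{-3}-3|x|^4N^{-7}$ is bounded on $\{N\ge1\}$ and the remaining term $\frac{q}{N^2}|f|^{q-2}f\,(x\cdot\nabla f)$ is controlled, after Young's inequality with conjugate exponents $q,q'$, by $\varepsilon\int|\nabla f|^q\,d\mu+C\int|f|^q\,d\mu$; no condition on $g$ is needed here.

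The vertical piece $\int \frac{g'(N)a|z|^2}{N^6}|f|^q\,d\mu$ is the heart of the matter and, I expect, the main obstacle. The difficulty is that $\nabla N$—indeed every first-order horizontal quantity $X_iN$, $w_i$, and $\sum_k|\Lambda^{(k)}x|^2$—vanishes identically on the center $\{x=0\}$, whereas the weight there equals $g'(N)/N^2>0$; consequently no first-order integration by parts can reach it. To see the axis one must pass to second order: since the matrices $\Lambda^{(k)}$ are linearly independent, the vertical fields satisfy $Z_k\in\mathrm{span}\{[X_i,X_j]\}$, so each $\partial_{z_k}$ can be written as a combination $\tfrac12\sum_{i,j}\tilde\gamma^{(k)}_{ij}[X_i,X_j]$ of horizontal commutators, whose coefficients are constant and therefore nowhere vanishing. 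The plan is thus to perform a first integration by parts in the vertical variables (realising $a|z|^2 g'(N)/N^6$ through $Z_k(z_k/N^3)$ and $g'(N)Z_kN=g'(N)\,az_k/(2N^3)$), producing a term $\int N^{-3}|f|^{q-2}f\,(z\cdot\nabla_z f)\,d\mu$, then to convert $\nabla_z$ into second-order horizontal derivatives $\sum_{i,j}W_{ij}X_iX_jf$ with $W_{ij}=\sum_k\tilde\gamma^{(k)}_{ij}z_k$ antisymmetric in $(i,j)$, and to integrate by parts once more in the horizontal directions.

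The key structural point I would exploit is this antisymmetry: in the second integration by parts the genuinely dangerous term, the one quadratic in $g'$ of the form $\sum_{i,j}W_{ij}\,g'(N)^2(X_iN)(X_jN)$, is symmetric in $(i,j)$ and hence vanishes, as does the analogous $g''(N)(X_iN)(X_jN)$ contribution; what remains is first order in $f$ with bounded weights, absorbable by Young into $\varepsilon\int|\nabla f|^q+C\int|f|^q$. The delicate point—and where I anticipate the real work lies—is that converting $z\cdot\nabla_z$ exactly through commutators is close to circular, reproducing the same vertical integral, so the estimate must be closed by retaining inequalities at the steps where a horizontal derivative falls on the density $e^{-g(N)}$ and is differentiated again; it is precisely here that I expect the hypothesis $g''(N)\le g'(N)^3N^3$ on $\{N\ge1\}$ to be needed, to dominate the residual $g'$- and $g''$-weighted integrals by the left-hand side rather than by an uncontrollable power of $g'$. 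I would then collect all estimates, choose $\varepsilon$ small, and absorb the $\int|\nabla f|^q$ term together with the left-hand-side-comparable contributions to obtain the inequality with $C,D$ independent of $f$; all boundary terms vanish because $f$ is supported in $\{N\ge1\}$, taking $f$ first smooth and compactly supported and passing to the general case by density.
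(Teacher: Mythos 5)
Your treatment of the horizontal piece is fine, but the vertical piece --- which, as you yourself note, is where the whole theorem lives, since on the center $\{x=0\}$ the entire weight $g'(N)/N^{2}$ is vertical --- does not close, and the structural claim you lean on is not accurate. After your vertical integration by parts and the conversion $z\cdot\nabla_{z}=\sum_{i,j}W_{ij}X_{i}X_{j}$, the single horizontal integration by parts lets the derivative hit the density $e^{-g(N)}$ only once, so no terms of the form $g'(N)^{2}(X_{i}N)(X_{j}N)$ or $g''(N)(X_{i}N)(X_{j}N)$ arise at that stage; what does arise is the cross term
\[
\int\sum_{i,j}\frac{W_{ij}\,g'(N)\,(X_{i}N)}{N^{3}}\,X_{j}\bigl(|f|^{q}\bigr)\,d\mu,
\]
which is not symmetric in $(i,j)$ and therefore does not cancel. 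Its weight is of size $|z|\,g'(N)\,|X_{i}N|/N^{3}\sim g'(N)|x|/N^{2}$, which is unbounded in the bulk region $|x|\sim N$ whenever $g'$ grows. It cannot be absorbed: Young's inequality against the left-hand side produces the quotient
\[
\frac{\bigl(g'(N)|x|/N^{2}\bigr)^{q}}{\bigl(g'(N)/N^{2}\bigr)^{q-1}}=\frac{g'(N)\,|x|^{q}}{N^{2}},
\]
which is unbounded, and absorption into $\varepsilon\int|\nabla f|^{q}d\mu+C\int|f|^{q}d\mu$ would require the weight itself to be bounded, which it is not. So the statement ``what remains is first order in $f$ with bounded weights'' is false.

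The natural repair --- integrating this cross term by parts once more in $X_{j}$, which is indeed where your antisymmetry cancellations of the $(g')^{2}$ and $g''$ terms genuinely occur --- makes the computation exactly circular, confirming the worry you flagged. Indeed, the surviving second-order term is controlled by the normalization $\mathrm{tr}\bigl(\tilde\gamma^{(k)}\Lambda^{(k')}\bigr)=2\delta_{kk'}$, which forces
\[
\sum_{i,j}W_{ij}\,X_{j}X_{i}N=\tfrac{1}{2}\sum_{i,j}W_{ij}[X_{j},X_{i}]N=-\frac{a|z|^{2}}{2N^{3}},
\]
so the vertical integral $\tfrac{1}{2}\int g'(N)\frac{a|z|^{2}}{N^{6}}|f|^{q}d\mu$ reappears on the right-hand side with coefficient exactly $+\tfrac{1}{2}$ and cancels the one you set out to estimate; what is left is an identity among lower-order terms, with no information about the weight. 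The same skew-symmetry that kills the dangerous quadratic terms is what resurrects the vertical weight with unit coefficient, and the hypothesis $g''(N)\le g'(N)^{3}N^{3}$ cannot rescue this, since the obstruction has nothing to do with $g''$. This is precisely why the paper proceeds differently: it keeps only the horizontal information $|\nabla N|^{2}g'(N)\ge A\frac{|x|^{2}}{N^{2}}g'(N)$, converts it into the full weight by the substitution $f\mapsto f^{2}/|x|^{2}$, and then does the real work on the bad region $\{|x|\sqrt{g'(N)}<2\}$ by a Hardy-type inequality averaged over the sets $F_{r}=\{|x|\sqrt{g'(N)}<r\}$ together with the Coarea formula; the hypothesis on $g''$ enters exactly there, to control the boundary term containing $\nabla_{euc}\bigl(|x|\sqrt{g'(N)}\bigr)$. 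Some substitute for that Hardy-type step is the missing idea in your scheme.
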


The proof of Theorem 1 uses the following properties of a smooth norm
$N$ proven in the Appendix.
\begin{lem}
\begin{singlespace}
There exist constants $A,C\in(0,\infty)$
\begin{equation}
A\frac{|x|^{2}}{N^{2}}\leq\left|\nabla N\right|^{2}\leq C\frac{|x|^{2}}{N^{2}}\label{eq:1-1}
\end{equation}
and there exists a constant $B\in(0,\infty)$ such that
\begin{equation}
|\Delta N|\leq B\frac{|x|^{2}}{N^{3}}\label{eq:2-1}
\end{equation}
and
\begin{equation}
\frac{x}{|x|}\cdot\nabla N=\frac{|x|^{3}}{N^{3}}.\label{eq:3}
\end{equation}
\end{singlespace}
\end{lem}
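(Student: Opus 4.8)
The plan is to compute the sub-gradient of $N$ explicitly and read off all three statements from the resulting formula, using the skew-symmetry of the matrices $\Lambda^{(k)}$ as the single recurring structural input. Writing $N^{4}=|x|^{4}+a|z|^{2}$, the Euclidean partials are $\partial_{x_{i}}N=|x|^{2}x_{i}/N^{3}$ and $\partial_{z_{k}}N=az_{k}/(2N^{3})$, so that
\[
X_{i}N=\frac{1}{N^{3}}\Bigl(|x|^{2}x_{i}+\tfrac{a}{4}\sum_{k=1}^{m}z_{k}(\Lambda^{(k)}x)_{i}\Bigr).
\]
Setting $w:=\tfrac{a}{4}\sum_{k}z_{k}\Lambda^{(k)}x\in\mathbb{R}^{n}$, this reads $\nabla N=N^{-3}(|x|^{2}x+w)$. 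Since each $\Lambda^{(k)}$ is skew-symmetric we have $\langle x,\Lambda^{(k)}x\rangle=0$, hence $\langle x,w\rangle=0$. This yields \eqref{eq:3} at once: $\langle x,\nabla N\rangle=|x|^{4}/N^{3}$, and dividing by $|x|$ gives $\frac{x}{|x|}\cdot\nabla N=|x|^{3}/N^{3}$. The same orthogonality collapses the cross term in the square, producing the clean identity $|\nabla N|^{2}=N^{-6}(|x|^{6}+|w|^{2})$, which underlies everything else.

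For the upper bound in \eqref{eq:1-1} I would estimate $|w|$ crudely: by the triangle inequality and Cauchy--Schwarz, $|w|\le\tfrac{a}{4}\sum_{k}|z_{k}|\,\|\Lambda^{(k)}\|\,|x|\le c_{0}\,|x|\,|z|$, with $c_{0}$ depending only on $a$ and the operator norms $\|\Lambda^{(k)}\|$. Since $a|z|^{2}=N^{4}-|x|^{4}\le N^{4}$ and $|x|^{6}\le|x|^{2}N^{4}$, this gives $|x|^{6}+|w|^{2}\le(1+c_{0}^{2}/a)\,|x|^{2}N^{4}$, i.e. $|\nabla N|^{2}\le C|x|^{2}/N^{2}$ (the case $x=0$ being trivial).

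Estimate \eqref{eq:2-1} I would obtain by expanding $\Delta N=\sum_{i}X_{i}^{2}N$. Writing $P:=|x|^{2}x+w$ so that $X_{i}N=P_{i}N^{-3}$, differentiation gives $\Delta N=N^{-3}\sum_{i}X_{i}P_{i}-3N^{-7}|P|^{2}$, and the last term equals $-3N^{-1}|\nabla N|^{2}$ by the identity above. For the first term the skew-symmetry again kills all diagonal $z$-derivative contributions, leaving $\sum_{i}X_{i}P_{i}=(n+2)|x|^{2}+\tfrac{a}{8}\sum_{k}|\Lambda^{(k)}x|^{2}$, which is $\le c_{1}|x|^{2}$. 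Combining with the upper bound just proved, $|\Delta N|\le c_{1}|x|^{2}/N^{3}+3N^{-1}\cdot C|x|^{2}/N^{2}=B|x|^{2}/N^{3}$, as claimed.

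The delicate point, and the one I expect to be the genuine obstacle, is the lower bound in \eqref{eq:1-1}. The trivial estimate $|\nabla N|^{2}\ge|x|^{6}/N^{6}=(|x|^{4}/N^{4})(|x|^{2}/N^{2})$ falls short of the asserted $A|x|^{2}/N^{2}$ exactly by the factor $|x|^{4}/N^{4}$, which degenerates when $|z|\gg|x|$. To reach $|x|^{2}/N^{2}$ one must control $|w|^{2}$ from below by a multiple of $|x|^{2}|z|^{2}$, i.e. establish a uniform nondegeneracy $|M(z)x|\ge c\,|z|\,|x|$ for $M(z):=\sum_{k}z_{k}\Lambda^{(k)}$; granting this, $|x|^{6}+|w|^{2}\ge A(|x|^{6}+a|x|^{2}|z|^{2})=A|x|^{2}N^{4}$ and the bound follows. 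This lower bound is automatic in the Heisenberg-type case, where $M(z)^{\mathsf T}M(z)=|z|^{2}\mathbb{I}$ forces $|M(z)x|=|z|\,|x|$ and in fact turns \eqref{eq:1-1} into an equality with $A=C=1$ for the Kaplan norm ($a=16$); so the heart of the argument is to extract the corresponding uniform lower bound for $M(z)$ from the skew-symmetry and linear independence of the family $\{\Lambda^{(k)}\}$, which is the step I would scrutinise most carefully.
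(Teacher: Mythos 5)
Your computation of $\nabla N$, the use of skew-symmetry to kill the cross term $\langle x,w\rangle$, the resulting proof of (\ref{eq:3}), the upper bound in (\ref{eq:1-1}), and the expansion of $\Delta N$ (with $\sum_{i}X_{i}P_{i}=(n+2)|x|^{2}+\tfrac{a}{8}\sum_{k}|\Lambda^{(k)}x|^{2}$ and the term $-3N^{-7}|P|^{2}=-3N^{-1}|\nabla N|^{2}$ absorbed via the upper bound) coincide, up to bookkeeping, with the paper's Appendix proof: the paper factors $|\nabla N|^{2}=\frac{|x|^{2}}{N^{2}}N^{-4}\bigl(|x|^{4}+\tfrac{a^{2}}{16}|M(z)\hat{x}|^{2}\bigr)$ and $\Delta N=\frac{|x|^{2}}{N^{3}}\left[\cdots\right]$ and reads the estimates off the brackets, but the algebra is identical to yours.

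The step you single out as the genuine obstacle is exactly where the paper's proof is deficient: for the lower bound in (\ref{eq:1-1}) the paper gives no argument at all, simply asserting after the displayed identity that constants $A,C$ exist (its remark about choosing $a$ small only concerns forcing $C\leq1$). Moreover, your suspicion cannot be resolved positively at the stated level of generality, because the nondegeneracy $|M(z)\hat{x}|\geq c|z|$ is false for general step-two Carnot groups, and with it the lower bound itself fails. Take $n=3$, $m=1$, and $\Lambda^{(1)}$ the generator of rotations in the $(x_{1},x_{2})$-plane (the group $\mathbb{H}^{1}\times\mathbb{R}$, which satisfies the paper's hypotheses of skew-symmetry and linear independence). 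For $x=e_{3}\in\ker\Lambda^{(1)}$ one has $w=0$, hence $|\nabla N|^{2}=|x|^{6}/N^{6}$, so that $\frac{|\nabla N|^{2}}{|x|^{2}/N^{2}}=\frac{|x|^{4}}{N^{4}}\rightarrow0$ as $|z|\rightarrow\infty$, and no constant $A>0$ can work. Thus the lower bound in (\ref{eq:1-1}) -- and the Lemma as stated -- requires an additional hypothesis guaranteeing that $M(z)$ is nonsingular for every $z\neq0$ (Heisenberg-type, as you note, or more generally M\'{e}tivier groups), in which case homogeneity in $z$ together with compactness of $\{|z|=1\}\times\{|\hat{x}|=1\}$ gives $|M(z)\hat{x}|\geq c|z|$ and your argument closes. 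In short: what you flagged is not a step you failed to close but a gap in the Lemma itself, which the paper's own proof passes over silently.
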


The other main tools we use are Hardy's inequality (see \cite{key-69-1}
and references therein) and the Coarea formula (page 468 of \cite{key-4}). 
\begin{proof}[Proof of Theorem 1]
First, we prove the result for $q=2:$

We note that using integration by parts, one gets
\[
\int\left(\triangledown N\right)\cdot\left(\triangledown f\right)e^{-g\left(N\right)}d\lambda=-\int\triangledown\left(\triangledown Ne^{-g\left(N\right)}\right)fd\lambda=-\int\Delta Nfe^{-g\left(N\right)}d\lambda+\int\vert\triangledown N\vert^{2}fg^{'}\left(N\right)e^{-g\left(N\right)}d\lambda.
\]

Netx, using (\ref{eq:1-1}) and (\ref{eq:2-1}),
\[
\int\left(\triangledown N\right)\cdot\left(\triangledown f\right)e^{-g\left(N\right)}d\lambda\geq-B\int\frac{|x|^{2}}{N^{3}}fe^{-g\left(N\right)}d\lambda+A\int\frac{|x|^{2}}{N^{2}}fg^{'}\left(N\right)e^{-g\left(N\right)}d\lambda.
\]

Replacing $f$ by ${\displaystyle \frac{f^{2}}{\vert x\vert^{2}}}$
:
\begin{equation}
\int\left(\triangledown N\right)\cdot\left(\triangledown\left(\frac{f^{2}}{|x|^{2}}\right)\right)e^{-g\left(N\right)}d\lambda\geq\int f^{2}\left(\frac{Ag'(N)}{N^{2}}-\frac{B}{N^{3}}\right)e^{-g\left(N\right)}d\lambda.\label{eq:4}
\end{equation}

As for the left-hand side of (\ref{eq:4}),
\[
\begin{array}{cl}
{\displaystyle \int\left(\triangledown N\right)\cdot\left(\triangledown\left(\frac{f^{2}}{\vert x\vert^{2}}\right)\right)e^{-g\left(N\right)}d\lambda} & {\displaystyle =\int\left(\triangledown N\right)\cdot\left[2f\frac{\triangledown f}{\vert x\vert^{2}}-\frac{2f^{2}\triangledown\vert x\vert}{\vert x\vert^{3}}\right]e^{-g\left(N\right)}d\lambda}\\
\\
 & {\displaystyle =\int\frac{2f}{\vert x\vert^{2}}\triangledown N\cdot\triangledown fe^{-g\left(N\right)}d\lambda-2\int f^{2}\frac{\triangledown N\cdot x}{\vert x\vert^{4}}e^{-g\left(N\right)}d\lambda}
\end{array}
\]

Using the calculation of $\triangledown N\cdot x,$ from (\ref{eq:3}),
we get:
\[
\begin{array}{cl}
\;\;\;\;\;\;\;\;\;\;\;\;\;\;\;\;\;\;\; & {\displaystyle =\int\frac{2f}{\vert x\vert^{2}}\triangledown N\cdot\triangledown fe^{-g\left(N\right)}d\lambda-2\int\frac{f^{2}}{N^{3}}e^{-g\left(N\right)}d\lambda}\\
\\
 & {\displaystyle \leq\int\frac{2f}{\vert x\vert^{2}}\triangledown N\cdot\triangledown fe^{-g\left(N\right)}d\lambda.}
\end{array}
\]

Combining with (\ref{eq:4}),

\[
\begin{array}{cl}
{\displaystyle \int f^{2}\left(\frac{Ag'(N)}{N^{2}}-\frac{B}{N^{3}}\right)e^{-g\left(N\right)}d\lambda} & {\displaystyle \leq\int\left(\triangledown N\right)\cdot\left(\triangledown\left(\frac{f^{2}}{\vert x\vert^{2}}\right)\right)e^{-g\left(N\right)}d\lambda}\\
\\
 & {\displaystyle \leq2\left|\int\frac{f}{\vert x\vert^{2}}\triangledown N\cdot\triangledown fe^{-g\left(N\right)}d\lambda\right|}\\
\\
 & {\displaystyle \leq2\int\frac{f}{\vert x\vert^{2}}|\triangledown N||\triangledown f|e^{-g\left(N\right)}d\lambda}
\end{array}
\]

using (\ref{eq:1-1}),
\[
\leq2\sqrt{C}\int\frac{\vert f\vert}{N\vert x\vert}\vert\triangledown f\vert e^{-g\left(N\right)}d\lambda
\]

Let $E=\{(x,z):|x|\geq\frac{1}{\sqrt{g'(N)}}\}$ and $F=\{(x,z):|x|<\frac{1}{\sqrt{g'(N)}}\}.$ 

Applying Cauchy\textquoteright s inequality with $\epsilon:$ ${\displaystyle ab\leq\frac{\epsilon a^{2}}{2}+\frac{b^{2}}{2\epsilon}}$
with ${\displaystyle a=\frac{\vert f\vert}{N\vert x\vert}e^{-\frac{g\left(N\right)}{2}}}$
and ${\displaystyle b=\sqrt{C}\vert\triangledown f\vert e^{-\frac{g\left(N\right)}{2}},}$ 

to obtain
\begin{equation}
\begin{array}{cl}
{\displaystyle \int f^{2}\left(\frac{Ag'(N)}{N^{2}}-\frac{B}{N^{3}}\right)e^{-g\left(N\right)}d\lambda} & {\displaystyle \leq\epsilon\int\frac{\vert f\vert^{2}}{N^{2}\vert x\vert^{2}}e^{-g\left(N\right)}d\lambda+\frac{C}{\epsilon}\int\vert\triangledown f\vert^{2}e^{-g\left(N\right)}d\lambda}\\
\\
 & {\displaystyle =\epsilon\int_{F}\frac{\vert f\vert^{2}}{N^{2}\vert x\vert^{2}}e^{-g\left(N\right)}d\lambda+\epsilon\int_{E}\frac{\vert f\vert^{2}}{N^{2}\vert x\vert^{2}}e^{-g\left(N\right)}d\lambda+\frac{C}{\epsilon}\int\vert\triangledown f\vert^{2}e^{-g\left(N\right)}d\lambda}\\
\\
 & {\displaystyle \leq\epsilon\int_{F}\frac{\vert fe^{\frac{-g(N)}{2}}\vert^{2}}{N^{2}|x|^{2}}d\lambda+\epsilon\int\frac{g'(N)\vert f\vert^{2}}{N^{2}}e^{-g\left(N\right)}d\lambda+\frac{C}{\epsilon}\int\vert\triangledown f\vert^{2}e^{-g\left(N\right)}d\lambda,}
\end{array}\label{eq:19}
\end{equation}
where (\ref{eq:19}) is the consequence of $E=\{(x,z):|x|\geq\frac{1}{\sqrt{g'(N)}}\}.$\\

The aim now is to estimate the first term on the right-hand side of
(\ref{eq:19}). Consider $F_{r}=\left\{ \left|x\right|\sqrt{g'\left(N\right)}<r\right\} ,$
where $1<r<2.$ Integrating by parts:

\begin{doublespace}
\[
\epsilon\int_{F_{r}}\frac{\vert fe^{\frac{-g(N)}{2}}\vert^{2}}{N^{2}|x|^{2}}d\lambda=\frac{\epsilon}{n-2}\int_{F_{r}}\frac{\vert fe^{\frac{-g(N)}{2}}\vert^{2}}{N^{2}}\nabla\left(\frac{x}{|x|^{2}}\right)d\lambda
\]
\[
\begin{array}{c}
{\displaystyle =-\frac{\epsilon}{n-2}\int_{F_{r}}\nabla\left(\left\vert \frac{fe^{\frac{-g(N)}{2}}}{N}\right\vert ^{2}\right)\cdot\frac{x}{|x|^{2}}d\lambda+\frac{\epsilon}{n-2}\int_{\partial F_{r}}\frac{f^{2}e^{-g(N)}}{N^{2}|x|^{2}}\sum_{j=1}^{n}\frac{x_{j}<X_{j}I,\nabla_{euc}\left(|x|\sqrt{g'(N)}\right)>}{\left|\nabla_{euc}\left(|x|\sqrt{g'(N)}\right)\right|}dH^{n+m-1}}\\
\\
{\displaystyle =-\frac{2\epsilon}{n-2}\int_{F_{r}}\frac{fe^{\frac{-g(N)}{2}}}{N}\nabla\left(\frac{fe^{\frac{-g(N)}{2}}}{N}\right)\cdot\frac{x}{|x|^{2}}d\lambda}\\
{\displaystyle +\frac{\epsilon}{n-2}\int_{\partial F_{r}}\frac{f^{2}e^{-g(N)}}{N^{2}|x|^{2}}\sum_{j=1}^{n}\frac{x_{j}<X_{j}I,\nabla_{euc}\left(|x|\sqrt{g'(N)}\right)>}{\left|\nabla_{euc}\left(|x|\sqrt{g'(N)}\right)\right|}dH^{n+m-1}}\\
\\
{\displaystyle \leq\frac{\epsilon}{2}\int_{F_{r}}\frac{\vert fe^{\frac{-g(N)}{2}}\vert^{2}}{N^{2}|x|^{2}}d\lambda+\frac{2\epsilon}{(n-2)^{2}}\int_{F_{r}}\left|\nabla\left(\frac{fe^{\frac{-g(N)}{2}}}{N}\right)\right|^{2}d\lambda}\\
{\displaystyle +\frac{\epsilon}{n-2}\int_{\partial F_{r}}\frac{f^{2}e^{-g(N)}}{N^{2}|x|^{2}}\sum_{j=1}^{n}\frac{x_{j}<X_{j}I,\nabla_{euc}\left(|x|\sqrt{g'(N)}\right)>}{\left|\nabla_{euc}\left(|x|\sqrt{g'(N)}\right)\right|}dH^{n+m-1},}
\end{array}
\]
where in the last step we used Cauchy's inequality. Subtracting on
both sides of the last inequality by ${\displaystyle \frac{\epsilon}{2}\int_{F_{r}}\frac{\vert fe^{\frac{-g(N)}{2}}\vert^{2}}{N^{2}|x|^{2}}d\lambda,}$
and using the fact that $1<r<2,$ we get:
\[
\epsilon\int_{F_{1}}\frac{\vert fe^{\frac{-g(N)}{2}}\vert^{2}}{N^{2}|x|^{2}}d\lambda\leq\epsilon\int_{F_{r}}\frac{\vert fe^{\frac{-g(N)}{2}}\vert^{2}}{N^{2}|x|^{2}}d\lambda
\]
\[
\leq\frac{4\epsilon}{(n-2)^{2}}\int_{F_{r}}\left|\nabla\left(\frac{fe^{\frac{-g(N)}{2}}}{N}\right)\right|^{2}d\lambda+\frac{2\epsilon}{n-2}\int_{\partial F_{r}}\frac{f^{2}e^{-g(N)}}{N^{2}|x|^{2}}\sum_{j=1}^{n}\frac{x_{j}<X_{j}I,\nabla_{euc}\left(|x|\sqrt{g'(N)}\right)>}{\left|\nabla_{euc}\left(|x|\sqrt{g'(N)}\right)\right|}dH^{n+m-1}
\]
\[
\leq\frac{4\epsilon}{(n-2)^{2}}\int_{F_{2}}\left|\nabla\left(\frac{fe^{\frac{-g(N)}{2}}}{N}\right)\right|^{2}d\lambda+\frac{2\epsilon}{n-2}\int_{\partial F_{r}}\frac{f^{2}e^{-g(N)}}{N^{2}|x|^{2}}\sum_{j=1}^{n}\frac{x_{j}<X_{j}I,\nabla_{euc}\left(|x|\sqrt{g'(N)}\right)>}{\left|\nabla_{euc}\left(|x|\sqrt{g'(N)}\right)\right|}dH^{n+m-1}
\]
\end{doublespace}

Integrating both sides of the inequality from $r=1$ to $r=2,$ we
get:

\[
\epsilon\int_{1}^{2}\int_{F_{1}}\frac{\vert fe^{\frac{-g(N)}{2}}\vert^{2}}{N^{2}|x|^{2}}d\lambda dr\leq\frac{4\epsilon}{(n-2)^{2}}\int_{1}^{2}\int_{F_{2}}\left|\nabla\left(\frac{fe^{\frac{-g(N)}{2}}}{N}\right)\right|^{2}d\lambda dr
\]

\[
+\frac{2\epsilon}{n-2}\int_{1}^{2}\int_{\partial F_{r}}\frac{f^{2}e^{-g(N)}}{N^{2}|x|^{2}}\sum_{j=1}^{n}\frac{x_{j}<X_{j}I,\nabla_{euc}\left(|x|\sqrt{g'(N)}\right)>}{\left|\nabla_{euc}\left(|x|\sqrt{g'(N)}\right)\right|}dH^{n+m-1}dr
\]
To recover the full measure in the boundary term, we use the Coarea
formula: 
\begin{equation}
\epsilon\int_{F_{1}}\frac{\vert fe^{\frac{-g(N)}{2}}\vert^{2}}{N^{2}|x|^{2}}d\lambda\leq\frac{4\epsilon}{(n-2)^{2}}\int_{F_{2}}\left|\nabla\left(\frac{fe^{\frac{-g(N)}{2}}}{N}\right)\right|^{2}d\lambda\label{eq:20}
\end{equation}
\[
+\frac{2\epsilon}{n-2}\int_{\{1<|x|\sqrt{g'(N)}<2\}}\frac{f^{2}e^{-g(N)}}{N^{2}|x|^{2}}\sum_{j=1}^{n}x_{j}<X_{j}I,\nabla_{euc}\left(|x|\sqrt{g'(N)}\right)>d\lambda
\]

It remains to compute the right hand side of (\ref{eq:20}). The first
term,
\[
\begin{array}{cc}
A & {\displaystyle =\frac{4\epsilon}{(n-2)^{2}}\int_{F_{2}}\left|\nabla\left(\frac{fe^{\frac{-g(N)}{2}}}{N}\right)\right|^{2}d\lambda=\frac{4\epsilon}{(n-2)^{2}}\int_{F_{2}}\left|\nabla f\frac{e^{-\frac{g(N)}{2}}}{N}-f\frac{g'(N)}{2}\frac{\nabla Ne^{-\frac{g(N)}{2}}}{N}-\frac{f\nabla Ne^{-\frac{g(N)}{2}}}{N^{2}}\right|^{2}d\lambda}\\
\\
 & {\displaystyle \leq\frac{16\epsilon}{(n-2)^{2}}\int_{F_{2}}\frac{|\nabla f|^{2}}{N^{2}}e^{-g(N)}d\lambda+\frac{4\epsilon}{(n-2)^{2}}\int_{F_{2}}f^{2}g'(N)^{2}\frac{|\nabla N|^{2}}{N^{2}}e^{-g(N)}d\lambda+\frac{16\epsilon}{(n-2)^{2}}\int_{F_{2}}f^{2}\frac{|\nabla N|^{2}}{N^{4}}e^{-g(N)}d\lambda}
\end{array}
\]
Using (\ref{eq:1-1}) and taking into consideration that $N>1$ and
on $F_{2},$ ${\displaystyle |\nabla N|^{2}\leq\frac{C|x|^{2}}{N^{2}}\leq\frac{4C}{N^{2}g'(N)},}$
\\
\[
A\leq\tilde{C}\int_{F_{2}}|\nabla f|^{2}e^{-g(N)}d\lambda+\tilde{D}\int_{F_{2}}f^{2}e^{-g(N)}d\lambda+\frac{16\epsilon C}{(n-2)^{2}}\int_{F_{2}}f^{2}\frac{g'(N)}{N^{4}}e^{-g(N)}d\lambda
\]

We do not worry about the third term in this inequality since it is
dominated by $\ensuremath{\int f^{2}\frac{g'(N)}{N^{2}}e^{-g(N)}d\lambda}$
for $N>1.$ For the second term of (\ref{eq:20}), 
\[
B=\frac{2\epsilon}{n-2}\int_{\{1<|x|\sqrt{g'(N)}<2\}}\frac{f^{2}e^{-g(N)}}{N^{2}|x|^{2}}\sum_{j=1}^{n}x_{j}<X_{j}I,\nabla_{euc}\left(|x|\sqrt{g'(N)}\right)>d\lambda.
\]

For $e_{i}$ the standard Euclidean basis on $\mathbb{R}^{n+m},$

\[
X_{j}I\cdot e_{i}=\begin{cases}
0\;\;\;\;\;\;\;\;\;\;\;\;\;\;\;\;\;\;\;\;\;\;\; & for\;\;\;i\neq j\;\;\;and\;\;\;i\leq n\\
1\;\;\;\;\;\;\;\;\;\;\;\;\;\;\;\;\;\;\;\;\;\;\; & for\;\;\;i=j\;\;\;and\;\;\;i\leq n\\
\frac{1}{2}\sum_{l=1}^{n}\Lambda_{jl}^{\left(i\right)}x_{l}\;\;\;\;\;\;\; & for\;\;\;n+1\leq i\leq n+m.
\end{cases}
\]

\[
\nabla_{euc}\left(|x|\sqrt{g'(N)}\right)\cdot e_{i}=\begin{cases}
\frac{x_{i}\sqrt{g'(N)}}{|x|}+\frac{|x|^{3}g''(N)x_{i}}{2\sqrt{g'(N)}N^{3}}\;\;\;\;\;\;\;\;\;\;\;\;\;\;\;\;\;\;for\;\;\;i=j\;\;\;and\;\;\;i\leq n\\
\frac{a|x|g''(N)z_{i}}{4\sqrt{g'(N)}N^{3}}\;\;\;\;\;\;\;\;\;\;\;\;\;\;\;\;\;\;\;\;\;\;\;\;\;\;\;\;\;\;\;\;\;\;for\;\;\;n+1\leq i\leq n+m
\end{cases}
\]

Taking the dot product and summing,
\[
\begin{array}{cl}
{\displaystyle \sum_{j=1}^{n}x_{j}<X_{j}I,\nabla_{euc}\left(|x|\sqrt{g'(N)}\right)>} & {\displaystyle =|x|\sqrt{g'(N)}+\frac{|x|^{5}g''(N)}{2\sqrt{g'(N)}N^{3}}+\sum_{j=1}^{n}x_{j}\sum_{i=n+1}^{n+m}\left(\frac{a|x|g''(N)}{8\sqrt{g'(N)}N^{3}}\right)z_{i}\sum_{l=1}^{n}\Lambda_{jl}^{(i)}x_{l}}\\
\\
 & {\displaystyle =|x|\sqrt{g'(N)}+\frac{|x|^{5}g''(N)}{2\sqrt{g'(N)}N^{3}}+\left(\frac{a|x|g''(N)}{8\sqrt{g'(N)}N^{3}}\right)\sum_{i=n+1}^{n+m}z_{i}\sum_{j=1}^{n}\sum_{l=1}^{n}\Lambda_{jl}^{(i)}x_{l}x_{j}}\\
\\
 & {\displaystyle =|x|\sqrt{g'(N)}+\frac{|x|^{5}g''(N)}{2\sqrt{g'(N)}N^{3}},}
\end{array}
\]

where ${\displaystyle \sum_{j=1}^{n}\sum_{l=1}^{n}\Lambda_{jl}^{(i)}x_{l}x_{j}=0}$
since ${\displaystyle \Lambda_{jl}^{(i)}}$ is skew symmetric.\\

Therefore, replacing, 
\[
\begin{array}{cl}
B & {\displaystyle =\frac{2\epsilon}{n-2}\int_{\{1<|x|\sqrt{g'(N)}<2\}}\frac{f^{2}e^{-g(N)}}{N^{2}|x|^{2}}\sum_{j=1}^{n}x_{j}<X_{j}I,\nabla_{euc}\left(|x|\sqrt{g'(N)}\right)>d\lambda}\\
\\
 & {\displaystyle =\frac{2\epsilon}{n-2}\int_{\{1<|x|\sqrt{g'(N)}<2\}}\frac{f^{2}\sqrt{g'(N)}}{N^{2}|x|}e^{-g(N)}d\lambda+\frac{2\epsilon}{n-2}\int_{\{1<|x|\sqrt{g'(N)}<2\}}\frac{f^{2}|x|^{3}g''(N)}{2N^{5}\sqrt{g'(N)}}e^{-g(N)}d\lambda.}
\end{array}
\]
Using the fact that we are integrating over $\{1<|x|\sqrt{g'(N)}<2\},$
\[
B\leq\frac{2\epsilon}{n-2}\int_{\{1<|x|\sqrt{g'(N)}<2\}}\frac{f^{2}g'(N)}{N^{2}}e^{-g(N)}d\lambda+\frac{8\epsilon}{n-2}\int_{\{1<|x|\sqrt{g'(N)}<2\}}\frac{f^{2}g''(N)}{N^{5}g'(N)^{2}}e^{-g(N)}d\lambda.
\]

Using the condition of the theorem that $g''(N)\leq g'(N)^{3}N^{3},$
we get
\[
B\leq\frac{10\epsilon}{n-2}\int_{\{1<|x|\sqrt{g'(N)}<2\}}\frac{f^{2}g'(N)}{N^{2}}e^{-g(N)}d\lambda.
\]

Inserting bounds on $A$ and $B$ in (\ref{eq:20}), we get:
\[
\epsilon\int_{F_{1}}\frac{|fe^{\frac{-g(N)}{2}}|^{2}}{N^{2}|x|^{2}}d\lambda\leq\tilde{C}\int_{F_{2}}|\nabla f|^{2}e^{-g(N)}d\lambda+\tilde{D}\int_{F_{2}}f^{2}e^{-g(N)}d\lambda
\]
\[
+\frac{16\epsilon C}{(n-2)^{2}}\int_{F_{2}}f^{2}\frac{g'(N)}{N^{4}}e^{-g(N)}d\lambda+\frac{10\epsilon}{n-2}\int_{\{1<|x|\sqrt{g'(N)}<2\}}\frac{f^{2}g'(N)}{N^{2}}e^{-g(N)}d\lambda.
\]

Using this last bound to estimate (\ref{eq:19}), we get:
\[
\int f^{2}\left(\frac{Ag'(N)}{N^{2}}-\frac{B}{N^{3}}\right)e^{-g\left(N\right)}d\lambda\leq
\]
\[
\tilde{C}\int|\nabla f|^{2}e^{-g(N)}d\lambda+\tilde{D}\int f^{2}e^{-g(N)}d\lambda+\frac{16\epsilon C}{(n-2)^{2}}\int f^{2}\frac{g'(N)}{N^{4}}e^{-g(N)}d\lambda+\left(\frac{10\epsilon}{n-2}+\epsilon\right)\int\frac{f^{2}g'(N)}{N^{2}}e^{-g(N)}d\lambda.
\]
On $\{N>1\},$ ${\displaystyle \int f^{2}\left(\frac{B}{N^{3}}\right)e^{-g\left(N\right)}d\lambda}$
and ${\displaystyle \frac{16\epsilon C}{(n-2)^{2}}\int f^{2}\frac{g'(N)}{N^{4}}e^{-g(N)}d\lambda}$
are of lower order. So, choosing ${\displaystyle \left(\frac{10\epsilon}{n-2}+\epsilon\right)<A}$
we get
\[
\int f^{2}\left(\frac{g^{'}\left(N\right)}{N^{2}}\right)e^{-g\left(N\right)}d\lambda\leq C\int|\nabla f|^{2}e^{-g(N)}d\lambda+D\int f^{2}e^{-g(N)}d\lambda.
\]

Secondly, for $q>2,$ replacing $|f|$ by ${\displaystyle \vert f\vert^{\frac{q}{2}},}$we
get:
\begin{equation}
\int\frac{g'\left(N\right)}{N^{2}}\vert f\vert^{q}d\mu\leq C\int\left|\triangledown\vert f\vert^{\frac{q}{2}}\right|^{2}d\mu+D\int\vert f\vert^{q}d\mu.\label{eq:21}
\end{equation}

Calculating,
\[
\begin{array}{cl}
{\displaystyle \int\left|\triangledown\vert f\vert^{\frac{q}{2}}\right|^{2}d\mu} & {\displaystyle =\int\left|\frac{q}{2}\vert f\vert^{\frac{q-2}{2}}\left(sgn\left(f\right)\right)\triangledown f\right|^{2}d\mu}\\
\\
 & {\displaystyle \leq\int\frac{q^{2}}{4}\vert f\vert^{q-2}\vert\triangledown f\vert^{2}d\mu.}
\end{array}
\]
\textbf{Remark: }We note that at this point we get the inequality
which implies the necessary and sufficient condition for exponential
decay in $\mathbb{L}_{p}$ as described in \cite{key-16}.

Using Hölder\textquoteright s inequality,
\begin{equation}
\begin{array}{cl}
{\displaystyle \int\left|\triangledown\vert f\vert^{\frac{q}{2}}\right|^{2}d\mu} & {\displaystyle \leq\int\frac{q^{2}}{4}\vert f\vert^{q-2}\vert\triangledown f\vert^{2}d\mu~\leq\frac{q^{2}}{4}\left(\int\vert f\vert^{q}d\mu\right)^{\frac{q-2}{q}}\left(\int\vert\triangledown f\vert^{q}d\mu\right)^{\frac{2}{q}}}\\
\\
 & {\displaystyle \leq\frac{q\left(q-2\right)}{4}\int\vert f\vert^{q}d\mu+\frac{q}{2}\int\vert\triangledown f\vert^{q}d\mu.}
\end{array}\label{eq:22}
\end{equation}

Where the last inequality uses $ab{\displaystyle \leq\frac{a^{p^{'}}}{p^{'}}+\frac{b^{q'}}{q'},}$
with ${\displaystyle a=\left(\int\vert f\vert^{q}d\mu\right)^{\frac{q-2}{q}},}$
${\displaystyle b=\left(\int\vert\triangledown f\vert^{q}d\mu\right)^{\frac{2}{q}},}$
and $p^{'}$ and $q'$ are conjugates. 

Choosing ${\displaystyle p^{'}=\frac{q}{q-2}},$ we obtain ${\displaystyle \frac{1}{q'}=1-\frac{q-2}{q}}$
, so ${\displaystyle q^{'}=\frac{q}{2}}.$ Using the inequalities
(\ref{eq:21}) and (\ref{eq:22}), we get,
\[
\begin{array}{cl}
{\displaystyle \int\frac{g'\left(N\right)}{N^{2}}\vert f\vert^{q}d\mu} & {\displaystyle \leq C\int\left|\triangledown\vert f\vert^{\frac{q}{2}}\right|^{2}d\mu+D\int\vert f\vert^{q}d\mu}\\
\\
 & {\displaystyle \leq C'\int\vert\triangledown f\vert^{q}d\mu+D'\int\vert f\vert^{q}d\mu.}
\end{array}
\]
\end{proof}

\section{Poincaré Inequality}

We now have the U-Bound (\ref{eq:u}) at our disposal and are ready
to prove the q-Poincaré inequality using the method \cite{key-6}:

Let $\lambda$ be a measure satisfying the q-Poincaré inequality for
every ball ${\displaystyle B_{R}=\{x:N\left(x\right)<R\},~}$ i.e.
there exists a constant $C_{R}\in\left(0,\infty\right)$ such that
\[
\frac{1}{\vert B_{R}\vert}\int_{B_{R}}\left|f-\frac{1}{\vert B_{R}\vert}\int_{B_{R}}f\right|^{q}d\lambda\leq C_{R}\frac{1}{\vert B_{R}\vert}\int_{B_{R}}\vert\triangledown f\vert^{q}d\lambda,
\]

where $1\leq q<\infty.$

Note that we have this Poincaré inequality on balls in the setting
of Nilpotent lie groups thanks to J. Jerison's celebrated paper \cite{key-18}.
With this we can use the following result: 
\begin{thm}[Hebisch, Zegarli\'{n}ski \cite{key-6}]
 Let $\mu$ be a probability measure on $\mathbb{R}^{n}$ which is
absolutely continuous with respect to the measure $\lambda$ and such
that
\[
\int f^{q}\eta d\mu\leq C\int\vert\triangledown f\vert^{q}d\mu+D\int f^{q}d\mu
\]
with some non-negative function $\eta$ and some constants $C,D\in\left(0,\infty\right)$
independent of a function $f.$ If for any $L\in\left(0,\infty\right)$
there is a constant $A_{L}$ such that ${\displaystyle \frac{1}{A_{L}}\leq\frac{d\mu}{d\lambda}\leq A_{L}}$
on the set $\left\{ \eta<L\right\} $ and, for some $R\in\left(0,\infty\right)$
(depending on L), we have $\left\{ \eta<L\right\} \subset B_{R},$
then $\mu$ satisfies the q-Poincaré inequality
\[
\mu\vert f-\mu f\vert^{q}\leq c\mu\vert\triangledown f\vert^{q}
\]
with some constant $c\in\left(0,\infty\right)$ independent of $f.$
\end{thm}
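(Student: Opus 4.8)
The plan is to deduce the $q$-Poincaré inequality from the hypothesised $U$-bound by the Hebisch--Zegarliński absorption argument: split space into the sublevel set $\{\eta<L\}$, where $\mu$ and $\lambda$ are comparable and Jerison's local Poincaré inequality is available, and its complement $\{\eta\ge L\}$, where $\eta$ is large and the $U$-bound takes over. Since the gradient is insensitive to additive constants, $\nabla(f-c_0)=\nabla f$, and since Jensen's inequality gives $|c_0-\mu f|^q=|\mu(f-c_0)|^q\le\mu|f-c_0|^q$ and hence $\mu|f-\mu f|^q\le 2^q\,\mu|f-c_0|^q$ by convexity, it suffices to bound $\int|f-c_0|^q\,d\mu$ by $c'\int|\nabla f|^q\,d\mu$ for one convenient choice of the constant $c_0$.

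First I would apply the hypothesised $U$-bound to $f-c_0$ to get
\[
\int|f-c_0|^q\eta\,d\mu\le C\int|\nabla f|^q\,d\mu+D\int|f-c_0|^q\,d\mu,
\]
and then split $\int|f-c_0|^q\,d\mu$ over $\{\eta<L\}$ and $\{\eta\ge L\}$. On the tail I use $1\le\eta/L$ together with the displayed $U$-bound:
\[
\int_{\{\eta\ge L\}}|f-c_0|^q\,d\mu\le\frac1L\int|f-c_0|^q\eta\,d\mu\le\frac{C}{L}\int|\nabla f|^q\,d\mu+\frac{D}{L}\int|f-c_0|^q\,d\mu.
\]
Choosing $L>2D$ lets me absorb the term $\tfrac{D}{L}\int|f-c_0|^q\,d\mu$ into the left-hand side, yielding
\[
\int|f-c_0|^q\,d\mu\le 2\int_{\{\eta<L\}}|f-c_0|^q\,d\mu+\frac{2C}{L}\int|\nabla f|^q\,d\mu.
\]
This absorption, made possible by the freedom to take $L$ large, is the heart of the argument.

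It then remains to control the local term. Fixing $L$ and the associated radius $R=R(L)$ with $\{\eta<L\}\subset B_R$, I use the upper density bound $d\mu/d\lambda\le A_L$ on $\{\eta<L\}$ to pass to $\lambda$ and enlarge the domain to $B_R$, obtaining $\int_{\{\eta<L\}}|f-c_0|^q\,d\mu\le A_L\int_{B_R}|f-c_0|^q\,d\lambda$. Taking $c_0=\frac{1}{|B_R|}\int_{B_R}f\,d\lambda$ and invoking Jerison's local $q$-Poincaré inequality on $B_R$ bounds this by $A_L C_R\int_{B_R}|\nabla f|^q\,d\lambda$. Finally I convert the gradient back to $\mu$: since $\eta$ is locally bounded we have $B_R\subset\{\eta<L'\}$ for $L'=1+\sup_{B_R}\eta<\infty$ (equivalently, the density is continuous and strictly positive, hence bounded below on the bounded ball $B_R$), so the lower bound $d\mu/d\lambda\ge 1/A_{L'}$ holds on $B_R$ and gives $\int_{B_R}|\nabla f|^q\,d\lambda\le A_{L'}\int|\nabla f|^q\,d\mu$. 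Chaining the three estimates yields $\int|f-c_0|^q\,d\mu\le c'\int|\nabla f|^q\,d\mu$, and the reduction of the first paragraph upgrades this to the claimed inequality $\mu|f-\mu f|^q\le c\,\mu|\nabla f|^q$.

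The step I expect to be most delicate is the two-sided comparison of $\mu$ and $\lambda$: the local Poincaré inequality is stated for $\lambda$, so I must pass from $\mu$ to $\lambda$ via the upper density bound in order to apply it, and then back from $\lambda$ to $\mu$ via the lower density bound in order to produce a genuine $\mu$-gradient term. This return trip forces me to verify that $\eta$ is bounded on the bounded ball $B_R$, so that $B_R$ itself lies in some sublevel set $\{\eta<L'\}$ on which the density comparison is valid. The absorption step also requires that $L$ can be chosen simultaneously larger than $2D$ and compatible with the containment $\{\eta<L\}\subset B_R$, which is automatic here since the hypothesis supplies a radius $R(L)$ for every level $L$.
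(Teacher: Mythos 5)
The paper never proves this statement: Theorem~3 is quoted from Hebisch--Zegarli\'nski \cite{key-6} and used as a black box, so there is no internal proof to compare against. Your argument reconstructs the standard absorption proof of that cited result, and its skeleton is sound: reduce to $\mu|f-c_0|^q$ via Jensen, apply the $U$-bound to $f-c_0$ (using $\nabla(f-c_0)=\nabla f$), bound the tail by $\int_{\{\eta\ge L\}}|f-c_0|^q\,d\mu\le\frac1L\int|f-c_0|^q\eta\,d\mu$ and absorb the resulting $\frac DL$-term by taking $L>2D$, then control the local term on $\{\eta<L\}\subset B_R$ by passing to $\lambda$ with the density upper bound and invoking Jerison's inequality on $B_R$ with $c_0$ the $\lambda$-mean over $B_R$.

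The one step that does not follow from the stated hypotheses is precisely the one you flag as delicate: converting $\int_{B_R}|\nabla f|^q\,d\lambda$ back into $\int|\nabla f|^q\,d\mu$. The theorem gives the two-sided density bound only on the sublevel sets $\{\eta<L\}$; nothing in it guarantees that $B_R$ lies in some sublevel set (i.e.\ that $\eta$ is bounded on $B_R$), nor that the density is continuous and strictly positive --- you assert both as if they were given, and they are not equivalent to each other either. This is not a removable blemish: without some such extra assumption the statement as transcribed here is false. For example, on $\mathbb{R}$ with $q=2$ take
\[
d\mu=\tfrac1Z e^{-x^2-1/|x|}\,dx,\qquad \eta(x)=|x|^{-1}+x^{2}.
\]
Integration by parts (using $\tfrac{d}{dx}e^{-1/x}=x^{-2}e^{-1/x}$) plus Young's inequality yields the $U$-bound; every set $\{\eta<L\}$ is bounded and bounded away from $0$, so the density comparison holds on it and $\{\eta<L\}\subset B_{\sqrt L}$; yet the Poincar\'e inequality fails, since $f_\delta=\max(-1,\min(1,x/\delta))$ gives $\mu|f_\delta'|^2\le\tfrac2Z\delta^{-1}e^{-1/\delta}\to0$ while $\mu|f_\delta-\mu f_\delta|^2\to1$. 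So your proof is the correct proof of the corrected statement --- one must add that $d\mu/d\lambda$ is bounded above and below on every ball $B_R$ (true in this paper's applications, where the density $e^{-g(N)}/Z$ is continuous and positive, and in \cite{key-6}) --- but as a derivation from the hypotheses actually listed it contains a genuine gap, which you should surface as a missing hypothesis rather than fold silently into the argument.
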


The role of $\eta$ in Theorem 3 is played by ${\displaystyle \frac{g'(N)}{N^{2}}}$
from the U-Bound of Theorem 1. Hence, we get the following corollaries: 
\begin{cor}
The Poincaré inequality for $q\geq2$ holds for the measure ${\displaystyle d\mu=\frac{exp\left(-cosh\left(N^{k}\right)\right)}{Z}d\lambda},$
where $\lambda$ is the Lebesgue measure, and $k\geq1$ in the setting
of the step-two Carnot group. 
\end{cor}

\begin{proof}
$g\left(N\right)=cosh\left(N^{k}\right),$ so $g'\left(N\right)=kN^{k-1}sinh\left(N^{k}\right),$
and\\
 ${\displaystyle g''(N)=k(k-1)N^{k-2}sinh(N^{k})+k^{2}N^{2k-2}cosh(N^{k})}.$

First, on $\{N\geq1\},$ ${\displaystyle g''(N)\leq k^{3}N^{3k}sinh^{3}(N^{k})=g'(N)^{3}N^{3},}$
so the condition of Theorem 1 is satisfied. Second,
\[
\begin{array}{cl}
{\displaystyle {\displaystyle \int\frac{g'\left(N\right)}{N^{2}}f^{q}d\mu}} & {\displaystyle {\displaystyle {\displaystyle =\int f^{q}\left[kN^{k-3}sinh\left(N^{k}\right)\right]d\mu}}}\\
\\
 & {\displaystyle =\int_{\left\{ N<1\right\} }f^{q}\left[kN^{k-3}\frac{e^{N^{k}}-e^{-N^{k}}}{2}\right]d\mu+\int_{\left\{ N\geq1\right\} }f^{q}\left[kN^{k-3}\frac{e^{N^{k}}-e^{-N^{k}}}{2}\right]d\mu}\\
\\
 & {\displaystyle \leq\int_{\left\{ N<1\right\} }f^{q}\left[k\frac{e-e^{-1}}{2}\right]d\mu+C^{'}\int_{\left\{ N\geq1\right\} }\vert\triangledown f\vert^{q}d\mu+D^{'}\int_{\left\{ N\geq1\right\} }\vert f\vert^{q}d\mu}\\
\\
 & {\displaystyle \leq C\int\vert\triangledown f\vert^{q}d\mu+D\int\vert f\vert^{q}d\mu.}
\end{array}
\]

Thus, the conditions of Theorem 3 are satisfied for $\eta=kN^{k-3}sinh\left(N^{k}\right)$
, and $k\geq1.$ So, the Poincaré inequality holds for $q\geq2.$
\end{proof}
The following corollary was proven in the setting of the Heisenberg-type
group for the Kaplan norm $N=\left(|x|^{4}+16|z|^{2}\right)^{\frac{1}{4}}$
by J. Inglis, Theorem 4.5.5 of \cite{key-7}. 

In the setting of the step-two Carnot group, we obtain a generalised
version for a similar homogeneous norm $N=\left(|x|^{4}+a|z|^{2}\right)^{\frac{1}{4}}$. 
\begin{cor}
The Poincaré inequality for $q\geq2$ holds for the measure ${\displaystyle d\mu=\frac{exp\left(-N^{k}\right)}{Z}d\lambda,}$
where $\lambda$ is the Lebesgue measure, and $k\geq4$ in the setting
of the step-two Carnot group.
\end{cor}

\begin{proof}
Let $g(N)=N^{k},$ so $g^{'}\left(N\right)=kN^{k-1},$ and $g''(N)=k(k-1)N^{k-2}.$
First, on $\{N\geq1\},$ ${\displaystyle g''(N)\leq k^{3}N^{3k}=g'(N)^{3}N^{3},}$
so the condition of Theorem 1 is satisfied. Secondly, 
\[
\begin{array}{cl}
{\displaystyle \int\frac{g'\left(N\right)}{N^{2}}f^{q}d\mu} & {\displaystyle =\int f^{q}\left[kN^{k-3}\right]d\mu}\\
\\
 & {\displaystyle =\int_{\left\{ N<1\right\} }f^{q}\left[kN^{k-3}\right]d\mu+\int_{\left\{ N\geq1\right\} }f^{q}\left[kN^{k-3}\right]d\mu}\\
\\
 & {\displaystyle \leq\int_{\left\{ N<1\right\} }kf^{q}d\mu+C^{'}\int_{\left\{ N\geq1\right\} }\vert\triangledown f\vert^{q}d\mu+D^{'}\int_{\left\{ N\geq1\right\} }\vert f\vert^{q}d\mu}\\
\\
 & {\displaystyle \leq C\int\vert\triangledown f\vert^{q}d\mu+D\int\vert f\vert^{q}d\mu.}
\end{array}
\]
Thus, the conditions of Theorem 3 are satisfied for $\eta=kN^{k-3},$
and $k\geq4.$ So, the Poincaré inequality holds for $q\geq2.$
\end{proof}
The following corollary improves Corollary 5 in an interesting way.
Namely, at a cost of a logarithmic factor, we now get the Poincaré
inequality for polynomial growth of order $k\geq3.$ 
\begin{cor}
The Poincaré inequality for $q\geq2$ holds for the measure ${\displaystyle d\mu=\frac{exp\left(-N^{k}log\left(N+1\right)\right)}{Z}d\lambda},$
where $\lambda$ is the Lebesgue measure, and $k\geq3$ in the setting
of the step-two Carnot group. 
\end{cor}

\begin{proof}
Let $g(N)=N^{k}log\left(N+1\right),$ so ${\displaystyle g^{'}\left(N\right)=kN^{k-1}log(N+1)+\frac{N^{k}}{N+1},}$
and
\[
{\displaystyle g''(N)=k(k-1)N^{k-2}log(N+1)+\frac{2kN^{k-1}}{N+1}-\frac{N^{k}}{(N+1)^{2}}.}
\]
First, on $\{N\geq1\},$ 
\[
g''(N)\leq k^{3}N^{3k}log^{3}(N+1)+\frac{N^{3k+3}}{(N+1)^{3}}+\frac{3k^{2}N^{3k+1}log^{2}(N+1)}{N+1}+\frac{3kN^{3k+2}log(N+1)}{(N+1)^{2}}=N^{3}g'(N)^{3},
\]
so the condition of Theorem 1 is satisfied. Secondly,
\[
\begin{array}{cl}
{\displaystyle \int\frac{g'\left(N\right)}{N^{2}}f^{q}d\mu} & {\displaystyle =\int f^{q}\left[kN^{k-3}log(N+1)+\frac{N^{k-2}}{N+1}\right]d\mu}\\
\\
 & {\displaystyle =\int_{\left\{ N<1\right\} }f^{q}\left[kN^{k-3}log(N+1)+\frac{N^{k-2}}{N+1}\right]d\mu+\int_{\left\{ N\geq1\right\} }f^{q}\left[kN^{k-3}log(N+1)+\frac{N^{k-2}}{N+1}\right]d\mu}\\
\\
 & {\displaystyle \leq\int_{\left\{ N<1\right\} }\left(klog(2)+1\right)f^{q}d\mu+C^{'}\int_{\left\{ N\geq1\right\} }\vert\triangledown f\vert^{q}d\mu+D^{'}\int_{\left\{ N\geq1\right\} }\vert f\vert^{q}d\mu}\\
\\
 & {\displaystyle \leq C\int\vert\triangledown f\vert^{q}d\mu+D\int\vert f\vert^{q}d\mu}.
\end{array}
\]
Thus, the conditions of Theorem 3 are satisfied for ${\displaystyle \eta=kN^{k-3}log(N+1)+\frac{N^{k-2}}{N+1},}$
and $k\geq3.$ Hence, the Poincaré inequality holds for $q\geq2.$ 
\end{proof}

\section{$\phi-$Logarithmic Sobolev Inequality }

After proving the q-Poincaré inequality for measures as a function
of the homogeneous norm $N=\left(|x|^{4}+a|z|^{2}\right)^{\frac{1}{4}}$,
a natural question would be if one could obtain other coercive inequalities.
J.Inglis et al.'s Theorem 2.1 \cite{key-8} proved that, for the measure
${\displaystyle d\mu=\frac{e^{-U}d\lambda}{Z},}$ provided we have
the U-Bound
\[
{\displaystyle \mu(|f|(|U|^{\beta}+|\triangledown U|))\leq A\mu|\triangledown f|+B\mu|f|,}
\]
 one obtains
\[
\mu\left(|f|\left|log\frac{|f|}{\mu|f|}\right|^{\beta}\right)\leq C\mu|\triangledown f|+B\mu|f|.
\]
We will first extend their theorem, and then we will use Theorem 1
to get more general coercive inequalities. 
\begin{thm}
Let U be a locally lipschitz function on $\mathbb{R}^{N}$ which is
bounded below such that $Z=\int e^{-U}d\lambda<\infty$ and ${\displaystyle d\mu=\frac{e^{-U}}{Z}d\lambda.}$
Let $\phi:[0,\infty)\rightarrow\mathbb{R}^{+}$ be a non-negative,
non-decreasing, concave function such that $\phi(0)>0,$ and $\phi'(0)>0.$
Assume the following classical Sobolev inequality is satisfied:
\[
\left(\int|f|^{q+\epsilon}d\lambda\right)^{\frac{q}{q+\epsilon}}\leq a\int|\triangledown f|^{q}d\lambda+b\int|f|^{q}d\lambda
\]
 for some $a,$ $b\in[0,\infty),$ and $\epsilon>0.$ Moreover, if
for some $A,$ $B\in[0,\infty),$ we have:
\[
\mu\left(|f|^{q}(\phi(U)+|\triangledown U|^{q})\right)\leq A\mu|\triangledown f|^{q}+B\mu|f|^{q},
\]
Then, there exists constants $C,$ $D\in[0,\infty)$ such that:
\[
\mu\left(|f|^{q}\phi\left(\left|log\frac{|f|^{q}}{\mu|f|^{q}}\right|\right)\right)\leq C\mu|\triangledown f|^{q}+D\mu|f|^{q},
\]
 for all locally Lipschitz functions $f.$ 
\end{thm}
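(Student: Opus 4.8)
The plan is to adapt the strategy of Inglis et al.\ (\cite{key-8}, Theorem 2.1), replacing the linear ``$|\log|$ to the power $\beta$'' structure by the general concave function $\phi$. The starting point is the elementary bound that on a probability space one can split any nonnegative quantity according to the size of $|f|^q$ relative to its mean. Set $G=|f|^q/\mu|f|^q$, so that $\mu G=1$, and decompose the domain into the region $\{G\le 1\}$ and the region $\{G>1\}$. On $\{G\le 1\}$ the quantity $|\log G|=-\log G\ge 0$ is controlled by comparing it with $U$ up to bounded error: the idea is that where $G$ is small, $e^{-U}$ must be comparably small, so $|\log G|\lesssim U + \mathrm{const}$, and concavity of $\phi$ together with monotonicity gives $\phi(|\log G|)\lesssim \phi(U)+\phi(\mathrm{const})$. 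This lets us absorb the contribution of $\{G\le1\}$ into the hypothesised U-bound term $\mu(|f|^q\phi(U))$ plus a multiple of $\mu|f|^q$.

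The harder region is $\{G>1\}$, where $|\log G|=\log G$ can be large. Here the standard device is to bound $\log G$ by a power of $G$ itself: concavity of $\phi$ yields a subadditivity-type estimate $\phi(s+t)\le\phi(s)+\phi(t)$ (using $\phi(0)>0$ so that $\phi$ is genuinely sub-additive), and more usefully $\phi(\log G)\le \phi'(0)^{-1}\phi(0)+ C\,G^{\epsilon/q}$ type control is \emph{not} what we want---instead we trade the logarithm for a small power. Concretely, I would use that for the entropy-type term one has $\mu(|f|^q\log G)=\mathrm{Ent}_\mu(|f|^q)$, and that by the classical Sobolev inequality assumed in the statement, $\mu|f|^{q+\epsilon}$ (suitably normalised) is finite and controls $\int|\triangledown f|^q+\int|f|^q$. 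The concavity of $\phi$ means $\phi(\log G)$ grows slower than any power $G^{\delta}$, so for the large-$G$ part we bound $\phi(\log G)\le c_\delta\, G^{\delta}$ with $\delta=\epsilon/q$; then $\mu(|f|^q\phi(\log G))\lesssim \mu(|f|^{q}G^{\delta})=(\mu|f|^q)^{1-\delta}\,\mu|f|^{q+\epsilon}$ up to normalisation, and the Sobolev hypothesis converts the last factor into $a\int|\triangledown f|^q+b\int|f|^q$. Finally one passes from $d\lambda$ back to $d\mu$ using boundedness of $U$ from below, which makes $e^{U}$ locally comparable after the Sobolev step is applied to $fe^{-U/q}$.

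Assembling the two regions gives
\[
\mu\!\left(|f|^{q}\phi(|\log G|)\right)\le C_1\,\mu\!\left(|f|^q\phi(U)\right)+C_2\,\mu|f|^q+C_3\Big(a\!\int|\triangledown f|^q+b\!\int|f|^q\Big),
\]
and the hypothesised U-bound $\mu(|f|^q(\phi(U)+|\triangledown U|^q))\le A\mu|\triangledown f|^q+B\mu|f|^q$ absorbs the first term, while the $|\triangledown U|^q$ slack is exactly what is needed to handle the commutator produced when the Sobolev inequality is applied to $fe^{-U/q}$ rather than to $f$ (there $\triangledown(fe^{-U/q})$ generates a term $|f|^q|\triangledown U|^q$). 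Collecting constants yields the claimed inequality with some $C,D\in[0,\infty)$.

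The step I expect to be the main obstacle is the large-$G$ region: one must make precise the claim that concavity forces $\phi(\log G)$ to be dominated by a small power $G^{\epsilon/q}$ uniformly, and then balance the exponent so that the Sobolev gain of $\epsilon$ in integrability is not overspent. The delicate bookkeeping is that applying Sobolev to $fe^{-U/q}$ introduces the gradient of $U$, and the whole argument only closes because the hypothesis bundles $\phi(U)+|\triangledown U|^q$ together into a single U-bound; getting the constants to line up so that both the $\phi(U)$ term and the $|\triangledown U|^q$ term are simultaneously absorbed is where the care is required.
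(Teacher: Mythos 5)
Your overall toolkit (concavity linearization $\phi(x)\le\phi(0)+\phi'(0)x$, applying the Euclidean Sobolev inequality to $fe^{-U/q}$, letting the U-bound absorb $\phi(U)$ and the $|\triangledown U|^{q}$ commutator) is the right family of ideas, but the decomposition you chose opens a gap that I do not think can be closed. The paper splits the space according to $E=\{|\log|f|^{q}|>U\}$ versus $E^{c}$, not according to the size of $G=|f|^{q}/\mu|f|^{q}$ relative to $1$, and this difference is essential. On your region $\{G>1\}$, bounding $\phi(\log G)\le c_{\delta}G^{\epsilon/q}$ is legitimate, but it reduces the problem to controlling $\mu(|f|^{q}G^{\epsilon/q})=\mu(|f|^{q+\epsilon})/(\mu|f|^{q})^{\epsilon/q}$, i.e.\ the $(q+\epsilon)$-moment \emph{with respect to} $\mu$. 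The Sobolev hypothesis is with respect to Lebesgue measure $\lambda$, and since $U$ is only bounded below you have $d\mu\le\frac{C}{Z}d\lambda$ but no inequality in the other direction. Concretely: applying Sobolev to $h=fe^{-U/q}$ controls $\left(\int|f|^{q+\epsilon}e^{-U(q+\epsilon)/q}d\lambda\right)^{q/(q+\epsilon)}$, and the weight $e^{-U(q+\epsilon)/q}=e^{-U}e^{-U\epsilon/q}$ is strictly smaller than $e^{-U}$ wherever $U$ is large, so this does \emph{not} dominate $\mu|f|^{q+\epsilon}$; applying Sobolev to $f$ itself produces $\int|\triangledown f|^{q}d\lambda$, which cannot be converted back to $\mu|\triangledown f|^{q}$ (again the wrong direction). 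This measure mismatch is precisely what the paper's decomposition is designed to avoid: on $E_{1}=\{\log|f|^{q}>U\}$ the quantity produced by the concavity step is $\log|f|^{q}-U=q\log(|f|e^{-U/q})$, i.e.\ the logarithm of exactly the function to which Sobolev is applied; Jensen's inequality with respect to the tilted probability measure $(|f|e^{-U/q})^{q}d\lambda/c$ then turns this entropy term into $\log$ of the Sobolev left-hand side, with every integral taken against $\lambda$ and with $e^{-U}$ appearing to exactly the first power.

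A second, smaller problem: your treatment of $\{G\le1\}$ rests on the claim that where $G$ is small, $e^{-U}$ must be comparably small, so that $|\log G|\lesssim U+\mathrm{const}$. This is false: $G$ small only means $f$ is small there, and an arbitrary locally Lipschitz $f$ can be tiny at points where $U$ is bounded. What actually saves this region (and what the paper uses on its set $E_{2}=\{\log|f|^{q}<-U\}$) is not a pointwise comparison of $|\log G|$ with $U$ but the weight itself: the function $t\mapsto t\left(|\log t|-U\right)$ is bounded by $e^{-U-1}$ on $\{t<e^{-U}\}$, so the integrand is killed by the factor $|f|^{q}$ and contributes only a constant, which becomes $D\,\mu|f|^{q}$ after removing the normalization $\mu|f|^{q}=1$. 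So the small-$G$ half of your argument is repairable, but only by reintroducing $U$-dependent sets, which leads you back to the paper's decomposition; the large-$G$ half as written does not close.
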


\begin{proof}
First of all, we remark that for a concave function $\phi$ as in
our assumptions, we have
\begin{equation}
\phi(y)-\phi(x)\leq\phi'(0)(y-x).\label{eq:15}
\end{equation}
 Suppose first that $\int|f|^{q}=1,$ and let $E=\{x\in\mathbb{R^{N}}:\;|log|f|^{q}|>U\}.$ 

\[
\begin{array}{cl}
{\displaystyle \int|f|^{q}\phi\left(\left|log|f|^{q}\right|\right)d\mu} & {\displaystyle =\int_{E}|f|^{q}\phi\left(\left|log|f|^{q}\right|\right)d\mu+\int_{E^{c}}|f|^{q}\phi\left(\left|log|f|^{q}\right|\right)d\mu}\\
\\
 & {\displaystyle =\int_{E}|f|^{q}\left[\phi\left(\left|log|f|^{q}\right|\right)-\phi(U)\right]d\mu+\int_{E}|f|^{q}\phi(U)d\mu+\int_{E^{c}}|f|^{q}\phi\left(\left|log|f|^{q}\right|\right)d\mu}\\
\\
 & {\displaystyle \leq\phi'(0)\int_{E}|f|^{q}\left(\left|log|f|^{q}\right|-U\right)d\mu+\int_{E}|f|^{q}\phi(U)d\mu+\int_{E^{c}}|f|^{q}\phi(U)d\mu,}
\end{array}
\]
where the last inequality uses (\ref{eq:15}) on $E,$ and uses the
fact that $\phi$ is non-decreasing on $E^{c},$ hence, $|log|f|^{q}|<U.$
Let $E_{1}=\{log|f|^{q}>U\},$ $E_{2}=\{log|f|^{q}<-U\},$ and $c=\int_{E_{1}}|f|^{q}e^{-U}d\lambda.$
\[
\int|f|^{q}\phi\left(\left|log|f|^{q}\right|\right)d\mu\leq\frac{c\phi'(0)q}{\epsilon Z}\int_{E_{1}}\frac{\left(|f|e^{-\frac{U}{q}}\right)^{q}}{c}log\left(|f|e^{-\frac{U}{q}}\right)^{\epsilon}d\lambda+\phi'(0)\int_{E_{2}}e^{-U}d\mu+\int|f|^{q}\phi(U)d\mu
\]
Using Jensen's inequality,
\[
\begin{array}{cl}
 & {\displaystyle \leq\frac{c\phi'(0)(q+\epsilon)}{Z\epsilon}log\left(\int_{E_{1}}\frac{\left(|f|e^{-\frac{U}{q}}\right)^{q+\epsilon}}{c}d\lambda\right)^{\frac{q}{q+\epsilon}}+\phi'(0)\int_{E_{2}}1d\mu+\int|f|^{q}\phi(U)d\mu}\\
 & {\displaystyle \leq\frac{c^{\frac{\epsilon}{q+\epsilon}}\phi'(0)(q+\epsilon)}{Z\epsilon}log\left(\int\left(|f|e^{-\frac{U}{q}}\right)^{q+\epsilon}d\lambda\right)^{\frac{q}{q+\epsilon}}+\phi'(0)Z+\int|f|^{q}\phi(U)d\mu}
\end{array}
\]
 Using classical Sobolev inequality, 
\[
\begin{array}{cl}
 & {\displaystyle \leq a\int|f|^{q}d\mu+b\int\left|\triangledown(fe^{-\frac{U}{q}})\right|^{q}d\lambda+\phi'(0)Z+\int|f|^{q}\phi(U)d\mu}\\
\\
 & {\displaystyle =a+\phi'(0)Z+b\int\left|(\triangledown f)e^{-\frac{U}{q}}-\frac{f}{q}(\triangledown U)e^{-\frac{U}{q}}\right|^{q}d\lambda+\int|f|^{q}\phi(U)d\mu}\\
\\
 & {\displaystyle \leq a+\phi'(0)Z+b2^{q-1}Z\int|\triangledown f|^{q}d\mu+\frac{Zb2^{q-1}}{q^{q}}\int|f|^{q}|\triangledown U|^{q}d\mu+\int|f|^{q}\phi(U)d\mu}\\
\\
 & {\displaystyle \leq a+\phi'(0)Z+b2^{q-1}Z\int|\triangledown f|^{q}d\mu+max\{\frac{Zb2^{q-1}}{q^{q}},1\}\int|f|^{q}|\left(\triangledown U|^{q}+\phi(U)\right)d\mu}
\end{array}
\]
Using the U-bound in the Theorem's condition
\[
\leq A+B\int|\triangledown f|^{q}d\mu
\]
Finally, replace $|f|^{q}$ by ${\displaystyle \frac{|f|^{q}}{\mu|f|^{q}}}$
to get the desired inequality. 
\end{proof}
\begin{cor}
$\phi(x)=(1+x)^{\beta},$ for $\beta\in(0,1]$ is non-negative, non-decreasing,
and concave function satisfying $\phi(0)=1>0,$ and $\phi'(0)=\beta>0.$
Therefore, Theorem 7 applies, and

\[
\mu\left(|f|^{q}\left|log\left(\frac{|f|^{q}}{\mu|f|^{q}}\right)\right|^{\beta}\right)\leq\mu\left(|f|^{q}\left(1+\left|log\left(\frac{|f|^{q}}{\mu|f|^{q}}\right)\right|\right)^{\beta}\right)\leq C\mu|\triangledown f|^{q}+D\mu|f|^{q}.
\]
\end{cor}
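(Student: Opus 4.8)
The plan is to verify that $\phi(x)=(1+x)^{\beta}$ satisfies every hypothesis placed on $\phi$ in Theorem 7, so that the theorem applies directly and furnishes the right-hand inequality of the asserted chain, and then to supply an elementary pointwise estimate that produces the left-hand inequality. First I would record the structural properties of $\phi$. Since $1+x\geq1>0$ on $[0,\infty)$ and $\beta>0$, we have $\phi(x)=(1+x)^{\beta}>0$, so $\phi$ is non-negative (in fact strictly positive). Differentiating gives $\phi'(x)=\beta(1+x)^{\beta-1}$ and $\phi''(x)=\beta(\beta-1)(1+x)^{\beta-2}$. Because $\beta>0$ we get $\phi'(x)>0$, so $\phi$ is non-decreasing, and because $\beta\in(0,1]$ we have $\beta-1\leq0$, so $\phi''(x)\leq0$ and $\phi$ is concave. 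Evaluating at the origin yields $\phi(0)=1>0$ and $\phi'(0)=\beta>0$, which completes the verification.

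With these properties in hand, Theorem 7 applies with this choice of $\phi$: granted the classical Sobolev inequality and the U-bound $\mu\left(|f|^{q}(\phi(U)+|\triangledown U|^{q})\right)\leq A\mu|\triangledown f|^{q}+B\mu|f|^{q}$, its conclusion reads
\[
\mu\left(|f|^{q}\left(1+\left|log\left(\frac{|f|^{q}}{\mu|f|^{q}}\right)\right|\right)^{\beta}\right)\leq C\mu|\triangledown f|^{q}+D\mu|f|^{q},
\]
which is exactly the right-hand inequality of the corollary.

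Finally I would establish the left-hand inequality by a pointwise comparison. For every real $t$ one has $|t|\leq1+|t|$, and since $s\mapsto s^{\beta}$ is non-decreasing on $[0,\infty)$ for $\beta>0$, it follows that $|t|^{\beta}\leq(1+|t|)^{\beta}$. Taking $t=log\left(|f|^{q}/\mu|f|^{q}\right)$ pointwise and integrating against $|f|^{q}\,d\mu$ gives
\[
\mu\left(|f|^{q}\left|log\left(\frac{|f|^{q}}{\mu|f|^{q}}\right)\right|^{\beta}\right)\leq\mu\left(|f|^{q}\left(1+\left|log\left(\frac{|f|^{q}}{\mu|f|^{q}}\right)\right|\right)^{\beta}\right),
\]
and chaining this with the previous display yields the claim. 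There is no serious obstacle here, since the argument is a verification of hypotheses followed by a monotonicity estimate; the only points meriting mild care are the boundary value $\beta=1$, where $\phi$ degenerates to the affine function $1+x$ and one must observe that affine functions are still concave and hence admissible in inequality (\ref{eq:15}), and the implicit assumption that the required U-bound with $\phi(U)=(1+U)^{\beta}$ is available, which for concrete densities would be supplied separately via Theorem 1.
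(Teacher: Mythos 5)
Your proposal is correct and follows exactly the route the paper intends: the paper gives no separate proof for this corollary because its statement is the proof — verify that $\phi(x)=(1+x)^{\beta}$ meets the hypotheses of Theorem 7 (non-negative, non-decreasing, concave, $\phi(0)=1>0$, $\phi'(0)=\beta>0$), invoke that theorem for the right-hand inequality, and use the trivial pointwise bound $|t|^{\beta}\leq(1+|t|)^{\beta}$ for the left-hand one. Your added remarks about the boundary case $\beta=1$ and about the U-bound hypothesis being supplied separately (e.g.\ via Theorem 1 for concrete densities) are accurate and consistent with how the paper later uses this corollary in Theorem 11.
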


\begin{cor}
Let $h^{(1)}(x)=log(\alpha+x),$ where $\alpha>1.$ Define recursively

$h^{(n)}(x)=log(\alpha+h^{(n-1)}(x)).$ Then, for all $n\geq1,$ $h^{(n)}(x)=\phi(x)$
of Theorem 7. Therefore, we obtain
\[
\mu\left(|f|^{q}log^{*(n)}\left(\frac{|f|^{q}}{\mu|f|^{q}}\right)\right)\leq\mu\left(|f|^{q}h^{(n)}\left(\frac{|f|^{q}}{\mu|f|^{q}}\right)\right)\leq C\mu|\triangledown f|^{q}+D\mu|f|^{q},
\]
where $log^{*(n)}$ is the positive part of $log^{(n)}.$ 
\end{cor}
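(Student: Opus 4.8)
The plan is to verify that every iterate $h^{(n)}$ satisfies the structural hypotheses imposed on $\phi$ in Theorem 7 — that it is nonnegative, nondecreasing, concave, and obeys $h^{(n)}(0)>0$ and $(h^{(n)})'(0)>0$ — so that Theorem 7 applies verbatim (under its standing $U$-bound assumption, now read with $\phi=h^{(n)}$), and then to establish the pointwise domination $\log^{*(n)}\leq h^{(n)}$ separately in order to recover the leftmost inequality. Both ingredients I would prove by induction on $n$.

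First I would dispose of the base case $n=1$ by direct computation. For $h^{(1)}(x)=\log(\alpha+x)$ with $\alpha>1$ we have $\alpha+x\geq\alpha>1$, so $h^{(1)}\geq 0$ and $h^{(1)}(0)=\log\alpha>0$; the derivative $(h^{(1)})'(x)=1/(\alpha+x)>0$ shows $h^{(1)}$ is nondecreasing and $\phi'(0)=1/\alpha>0$; and $(h^{(1)})''(x)=-1/(\alpha+x)^2<0$ gives concavity.

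For the inductive step I would assume $h^{(n-1)}$ has all five properties and propagate them to $h^{(n)}(x)=\log(\alpha+h^{(n-1)}(x))$. Nonnegativity follows since $h^{(n-1)}\geq 0$ forces $\alpha+h^{(n-1)}\geq\alpha>1$; the chain rule $(h^{(n)})'=(h^{(n-1)})'/(\alpha+h^{(n-1)})$ shows $h^{(n)}$ is nondecreasing with $(h^{(n)})'(0)>0$; and $h^{(n)}(0)=\log(\alpha+h^{(n-1)}(0))\geq\log\alpha>0$. The only genuine point, which I expect to be the main obstacle, is concavity. Rather than grind through the explicit second derivative, I would invoke the composition principle: the outer map $t\mapsto\log(\alpha+t)$ is concave and nondecreasing on $[0,\infty)$ and the inner map $h^{(n-1)}$ is concave by hypothesis, and a nondecreasing concave function of a concave function is again concave. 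This closes the induction, so each $h^{(n)}$ is an admissible $\phi$ and Theorem 7 yields the right-hand inequality.

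Finally I would prove $\log^{*(n)}\leq h^{(n)}$ on $[0,\infty)$ by a parallel induction. For $n=1$, since $\alpha+y>y$ and $\alpha+y>1$ we obtain $\log(\alpha+y)\geq\max(\log y,0)=\log^{*(1)}(y)$. For the inductive step, if $\log^{(n)}(y)$ is negative or undefined then its positive part is $0\leq h^{(n)}(y)$; otherwise $\log^{(n-1)}(y)>1$ so it coincides with $\log^{*(n-1)}(y)$, and the inductive hypothesis together with monotonicity of $\log$ gives $\log^{(n)}(y)=\log\!\left(\log^{(n-1)}(y)\right)\leq\log\!\left(h^{(n-1)}(y)\right)\leq\log\!\left(\alpha+h^{(n-1)}(y)\right)=h^{(n)}(y)$. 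Chaining this domination with the bound from Theorem 7 produces the displayed string of inequalities.
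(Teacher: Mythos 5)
Your overall strategy coincides with the paper's: an induction verifying that each $h^{(n)}$ satisfies the hypotheses placed on $\phi$ in Theorem 7. Your base case is identical; in the inductive step the paper differentiates twice explicitly (its displayed formula for $(h^{(k+1)})''$ has a typo --- the numerator of the second term should be $\bigl((h^{(k)})'\bigr)^{2}$ --- which does not affect the sign argument), whereas you invoke the composition principle for concavity: same substance, cleaner execution. Moreover, your separate induction proving the pointwise bound $\log^{*(n)}(y)\leq h^{(n)}(y)$ is correct and supplies the left-hand inequality of the display, a step the paper's proof omits entirely (it ends with ``the result follows directly'').

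There is, however, a genuine gap in how you obtain the right-hand inequality, namely in the claim that Theorem 7 with $\phi=h^{(n)}$ ``applies verbatim'' to give $\mu\bigl(|f|^{q}h^{(n)}(y)\bigr)\leq C\mu|\triangledown f|^{q}+D\mu|f|^{q}$, where $y$ denotes the ratio $|f|^{q}/\mu|f|^{q}$. The conclusion of Theorem 7 is
\[
\mu\left(|f|^{q}\,\phi\left(\left|\log\frac{|f|^{q}}{\mu|f|^{q}}\right|\right)\right)\leq C\mu|\triangledown f|^{q}+D\mu|f|^{q},
\]
i.e.\ $\phi$ is evaluated at $|\log y|$, not at $y$ itself. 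Since $h^{(n)}$ is increasing and $y\geq|\log y|$ on $\{y\geq1\}$, the quantity Theorem 7 controls is the \emph{smaller} one --- for $n=1$ the discrepancy is of order $\log y$ versus $\log\log y$ --- so your right-hand inequality does not follow from the theorem as invoked, and chaining it with your domination lemma leaves the display unproved. (The paper's one-line deduction suffers from the same defect, so you inherited this gap rather than introduced it, but it is a gap.) It can be repaired entirely with your own tools: set $\psi_{1}(u)=u+\log(\alpha+1)$ and $\psi_{n}(u)=\log\bigl(\alpha+\psi_{n-1}(u)\bigr)$. Each $\psi_{n}$ is non-negative, non-decreasing, concave, with $\psi_{n}(0)>0$ and $\psi_{n}'(0)>0$ ($\psi_{1}$ is affine; the induction is exactly your composition argument). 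Furthermore $h^{(1)}(y)=\log(\alpha+y)\leq\log\bigl((\alpha+1)\max(y,1)\bigr)=\psi_{1}(\log^{+}y)\leq\psi_{1}(|\log y|)$, and inductively $h^{(n)}(y)=\log\bigl(\alpha+h^{(n-1)}(y)\bigr)\leq\log\bigl(\alpha+\psi_{n-1}(|\log y|)\bigr)=\psi_{n}(|\log y|)$. Applying Theorem 7 with $\phi=\psi_{n}$ (under the correspondingly stated U-bound hypothesis, with $\psi_{n}(U)$ in place of $h^{(n)}(U)$) bounds $\mu\bigl(|f|^{q}\psi_{n}(|\log y|)\bigr)$, which dominates $\mu\bigl(|f|^{q}h^{(n)}(y)\bigr)$; chaining this with your pointwise lemma $\log^{*(n)}\leq h^{(n)}$ then yields the full string of inequalities.
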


\begin{proof}
The proof proceeds by induction. For $n=1,$ ${\displaystyle h^{(1)}(x)=log(\alpha+x),}$
so ${\displaystyle h^{(1)}(x)'=\frac{1}{\alpha+x},}$ and ${\displaystyle h^{(1)}(x)''=\frac{-1}{(\alpha+x)^{2}}.}$
${\displaystyle h^{(1)}(0)=log(\alpha)>0,}$ and ${\displaystyle h^{(1)}(0)'=\frac{1}{\alpha}>0.}$
In addition, $h^{(1)}(x)$ is non-negative, non-decreasing, and concave;
hence the conditions of Theorem 7 are satisfied. 

Assume it is true for $n=k,$ prove it is true for $n=k+1:$ ${\displaystyle h^{(k+1)}(x)=log(\alpha+h^{(k)}(x)),}$
so ${\displaystyle h^{(k+1)}(x)'=\frac{h^{(k)}(x)'}{\alpha+h^{(k)}(x)},}$
and ${\displaystyle h^{(k+1)}(x)''=\frac{h^{(k)}(x)''}{\alpha+h^{(k)}(x)}-\frac{h^{(k)}(x)'}{(\alpha+h^{(k)}(x))^{2}}.}$
The result follows directly. 
\end{proof}
Returning to the measure as a function of the homogeneous norm $N=\left(|x|^{4}+a|z|^{2}\right)^{\frac{1}{4}}$,
${\displaystyle d\mu=\frac{e^{-N^{p}}}{Z}d\lambda,}$ we will prove
using Theorems 1 and 7, that the Log$^{\beta}$-Sobolev inequality
$(0<\beta\leq1)$ (Corollary 8) holds for $q\geq2,$ yet fails for
$1<q<{\displaystyle \frac{2p\beta}{p-1}}.$ To start with, we will
show why the Log$^{\beta}$-Sobolev inequality fails for $1<q<{\displaystyle \frac{2p\beta}{p-1}}.$
The proof uses the idea of Theorem 6.3 of \cite{key-6}. 
\begin{thm}
Let $\mathbb{G}$ be a stratified group, and $N$ be a smooth homogenous
norm on $\mathbb{G}.$ For $\alpha>0,\;p\geq1,$ let ${\displaystyle d\mu=\frac{e^{-\alpha N^{p}}}{Z}d\lambda,}$
where $Z$ is the normalization constant. The measure $\mu$ satisfies
no Log$^{\beta}$-Sobolev inequality $(0<\beta\leq1)$ for $1<q<{\displaystyle \frac{2p\beta}{p-1}}.$
\end{thm}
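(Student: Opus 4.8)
The plan is to argue by contradiction: assume the Log$^{\beta}$-Sobolev inequality of Corollary 8 holds for the given $q$ with constants $C,D$, and then exhibit a one-parameter family of test functions along which the left-hand side outgrows the right-hand side exactly when $q<\frac{2p\beta}{p-1}$. The entire gain over the naive radial bound (radial test functions $e^{sN^{p}/q}$ only force $q\ge\frac{p\beta}{p-1}$) will come from exploiting the degeneracy of the horizontal gradient of $N$ along the centre $\{x=0\}$: by the Lemma, $|\nabla N|^{2}\sim|x|^{2}/N^{2}$, so $|\nabla N|$ is small in a thin tube around the centre, and concentrating the test functions there makes the energy term cheap relative to the entropy term.

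Concretely, I would fix a smooth cutoff $\chi$ (equal to $1$ near $0$, supported in $[0,2]$) and set, for $s\uparrow\alpha$,
\[
f_{s}=e^{\frac{s}{q}N^{p}}\,\chi\!\left(|x|\,N^{-\frac{3-p}{2}}\right),
\]
so that $|f_{s}|^{q}=e^{sN^{p}}$ on the tube $\mathcal{T}=\{|x|\lesssim N^{(3-p)/2}\}$ and $f_s$ vanishes outside a dilate of it. The exponent $\tfrac{3-p}{2}$ is chosen so that on $\mathcal{T}$ one has $|\nabla N|\sim|x|/N\sim N^{-\frac{p-1}{2}}$; hence the dominant gradient contribution is
\[
|\nabla f_{s}|\ \sim\ \frac{sp}{q}\,N^{p-1}\,|\nabla N|\,|f_{s}|\ \sim\ N^{\frac{p-1}{2}}\,|f_{s}|,
\]
i.e. the effective power of $N$ in the energy is \emph{halved} relative to the radial case. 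A separate check shows the gradient of the cutoff contributes only a factor $N^{(p-3)/2}$, of strictly lower order than $N^{(p-1)/2}$, so it may be discarded.

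The next step is to evaluate the three integrals as $t:=\alpha-s\downarrow0$ using homogeneous polar coordinates ($d\lambda\sim R^{Q-1}\,dR\,d\sigma$ with $Q=n+2m$), the fact that the tube cuts out a cap $\{|x|/N\lesssim N^{-\frac{p-1}{2}}\}$ of the unit sphere of $\sigma$-measure $\sim N^{-\frac{(p-1)n}{2}}$, and the Laplace/Watson asymptotic $\int_{1}^{\infty}R^{\gamma}e^{-tR^{p}}\,dR\sim c\,t^{-(\gamma+1)/p}$. Writing $a_{0}=\frac{1}{p}\big(Q-\frac{(p-1)n}{2}\big)$, this should yield
\[
\mu|f_{s}|^{q}\sim t^{-a_{0}},\qquad \mu|\nabla f_{s}|^{q}\sim t^{-a_{0}-\frac{(p-1)q}{2p}},\qquad \mu\Big(|f_{s}|^{q}\big|\log\tfrac{|f_{s}|^{q}}{\mu|f_{s}|^{q}}\big|^{\beta}\Big)\sim t^{-a_{0}-\beta},
\]
where for the entropy integral I use that on the relevant shell $N\sim t^{-1/p}$ the logarithm is dominated by $sN^{p}$ (which beats $\log\mu|f_s|^q\sim a_0\log\frac1t$), so $\big|\log\frac{|f_{s}|^{q}}{\mu|f_{s}|^{q}}\big|^{\beta}\sim(sN^{p})^{\beta}$.

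Feeding these into the assumed bound $\text{(entropy)}\le C\,\mu|\nabla f_{s}|^{q}+D\,\mu|f_{s}|^{q}$ and letting $t\downarrow0$, the $D$-term (exponent $a_{0}$) is negligible and one compares the exponents $a_{0}+\beta$ and $a_{0}+\frac{(p-1)q}{2p}$; the inequality can survive only if $\beta\le\frac{(p-1)q}{2p}$, i.e. $q\ge\frac{2p\beta}{p-1}$. Hence for $1<q<\frac{2p\beta}{p-1}$ the ratio blows up and the inequality fails, which is the claim. The main obstacle is the sharp bookkeeping of the tube: one must (i) pin down the tube scaling $|x|\sim N^{(3-p)/2}$ that halves the gradient power — this is precisely where the step-two degeneracy produces the factor $2$ — and (ii) verify that the cutoff's own gradient and the transverse $x$-integration inside the tube both remain of strictly lower order, and that the Laplace asymptotics hold uniformly as $s\uparrow\alpha$. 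These estimates, rather than any conceptual difficulty, are the crux.
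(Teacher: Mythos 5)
Your strategy (exponential weights $e^{sN^{p}/q}$ concentrated on a tube around the centre) is genuinely different from the paper's proof, which instead tests the inequality on plateau functions supported on Carnot--Carath\'eodory balls $B(tx_{0},2r)$, $r=t^{-(p-1)/2}$, centred at dilates $tx_{0}$ of a point where $(\nabla N)(x_{0})=0$. Both exploit the same degeneracy, but your version has a concrete gap: the Laplace asymptotics you invoke, $\int_{1}^{\infty}R^{\gamma}e^{-tR^{p}}dR\sim ct^{-(\gamma+1)/p}$, are valid only for $\gamma>-1$, and in part of the theorem's range your exponents violate this. The shell $\{N\sim R\}$ intersected with your tube $\{|x|\lesssim N^{(3-p)/2}\}$ has volume $\sim R^{pa_{0}}$ with $pa_{0}=Q-\frac{n(p-1)}{2}$, which is negative once $p$ is large. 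When moreover $a_{0}+\beta<0$, i.e. $\beta<\frac{n(p-1)-2Q}{2p}$, \emph{all} the integrals in your scheme stay bounded as $s\uparrow\alpha$: indeed $\mu|f_{s}|^{q}\leq\frac{1}{Z}\lambda(\mathcal{T}')<\infty$ and the entropy is at most $\frac{1}{Z}\int_{\mathcal{T}'}(\alpha N^{p}+C)^{\beta}d\lambda<\infty$, uniformly in $s$, while $\mu|f_{s}|^{q}$ is bounded below by a positive constant; so the assumed inequality is never contradicted by this family. This dead zone overlaps the theorem's claim whenever $(n-1)(p-1)>2Q$, i.e. $p>1+\frac{2Q}{n-1}$. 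For example, on the Heisenberg group ($n=2$, $m=1$, $Q=4$) take $p=12$, $\beta=\frac{1}{2}$: then $a_{0}=-\frac{7}{12}$, $a_{0}+\beta=-\frac{1}{12}<0$, yet the theorem asserts failure of the Log$^{1/2}$-Sobolev inequality for all $1<q<\frac{12}{11}$. Your construction cannot reach these parameters. A second, scope-related defect: Theorem 9 is stated for an arbitrary smooth homogeneous norm on an arbitrary stratified group, whereas your argument uses $|\nabla N|\sim|x|/N$ from Lemma 2, which is specific to the Kaplan-type norm on step-two groups.

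The reason the paper's localization avoids this is structural: on the support of its test function the density is comparable to the single constant $e^{-\alpha N^{p}(tx_{0})}$, so this exponential factor and the ball volume $r^{Q}$ multiply the entropy, the energy, and the mass alike and cancel from the assumed inequality, leaving the clean comparison $t^{p\beta}\leq M\,r^{-q}=M\,t^{q(p-1)/2}$, which fails for large $t$ exactly when $q<\frac{2p\beta}{p-1}$, with no dimensional or sign restrictions. The role your tube width plays is played there by the second-order estimate $|N(x)-N(tx_{0})|\leq c_{1}r^{2}$ (Lemma 6.3 of Hebisch--Zegarli\'{n}ski), valid precisely because $\nabla N$ vanishes at $x_{0}$; that estimate needs only homogeneity and smoothness of $N$, which is why the paper's proof covers the general statement. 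To salvage your approach you would have to relocate the mass so that the unfavourable volume exponent appears identically on both sides of the inequality rather than additively in each Laplace exponent --- which, pushed to its conclusion, is essentially the paper's pointwise localization.
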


\begin{proof}
The proof is by contradiction. Let $x_{0}$ be such that $(\triangledown N)(x_{0})=0.$
For $t>0$ put ${\displaystyle r=t^{\frac{-p+1}{2}},}$ and

${\displaystyle f=max\left[min\left(\frac{2-d(x,tx_{0})}{r},1\right),0\right].}$
On $B(tx_{0},2r)=\{x:d(x,tx_{0})\leq2r\},$ by homogeneity, by Lemma
6.3 of \cite{key-6}, and by the fact that $(\triangledown N)(x_{0})=0,$
we have $|N(x)-N(tx_{0})|\leq c_{1}r^{2},$ so $|N(x)^{p}-N(tx_{0})^{p}|\leq c_{2}.$
Consequently, the exponential factor in $\mu$ is comparable to a
constant on the support of $f.$ Also, $|\triangledown f|\leq\frac{1}{r},$
and
\begin{equation}
\mu|f|^{q}\approx r^{Q}\left(e^{-\alpha N^{p}(tx_{0})}\right)\label{eq:16}
\end{equation}
\begin{equation}
log(\mu|f|^{q})\approx-t^{p}\label{eq:17}
\end{equation}
\begin{equation}
\mu|\triangledown f|^{q}\approx r^{-q}r^{Q}e^{-\alpha N^{p}(tx_{0})}\label{eq:18}
\end{equation}
Choose $t$ large enough so that $r<\frac{2}{3}.$ On $B(tx_{0},2r),$
$2-d(x,tx_{0})\geq2-2r\geq r.$ Thus, we have ${\displaystyle f=max\left[min\left(\frac{2-d(x,tx_{0})}{r},1\right),0\right]=1,}$
and consequently $log|f|^{q}=0.$ 

\[
\mu\left(|f|^{q}\left|log\left(\frac{|f|^{q}}{\mu|f|^{q}}\right)\right|^{\beta}\right)=\mu\left(|f|^{q}\left(\left|log|f|^{q}-log\mu|f|^{q}\right|^{\beta}\right)\right)=\mu\left(|f|^{q}\left(\left|log\mu|f|^{q}\right|^{\beta}\right)\right)
\]
 by (\ref{eq:17}),
\[
\approx\mu(|f|^{q}t^{p\beta})
\]
by (\ref{eq:16})
\[
\approx t^{p\beta}r^{Q}e^{-\alpha N^{p}(tx_{0})}.
\]
Assuming we have $\beta-$logarithmic Sobolev inequality, and using
(\ref{eq:18}), we get: 
\[
t^{p\beta}r^{Q}e^{-\alpha N^{p}(tx_{0})}\leq Mr^{-q}r^{Q}e^{-\alpha N^{p}(tx_{0})}
\]
since ${\displaystyle r=t^{\frac{-p+1}{2}},}$ 
\[
t^{p\beta}\leq Mt^{-q(\frac{-p+1}{2})}.
\]
For $t$ large enough, we get a contradiction when $p\beta>{\displaystyle \frac{q(p-1)}{2}}$
i.e. for $q<{\displaystyle \frac{2p\beta}{p-1}}$. So, the measure
$\mu$ satisfies no $\beta-$logarithmic Sobolev inequality for $1<q<{\displaystyle \frac{2p\beta}{p-1}}.$ 
\end{proof}
Now we prove that for $q\geq2$, Log$^{\beta}$-Sobolev inequality
holds true for ${\displaystyle d\mu=\frac{e^{-\alpha N^{p}}}{Z}d\lambda},$
where $N=\left(|x|^{4}+a|z|^{2}\right)^{\frac{1}{4}}$ and ${\displaystyle 0<\beta\leq\frac{p-3}{p}.}$
\begin{thm}
Let $\mathbb{G}$ be an step-two Carnot group. Consider the probability
measure given by
\[
{\displaystyle d\mu=\frac{e^{-g(N)}}{Z}d\lambda,}
\]
where $Z$ is the normalization constant and $N=\left(|x|^{4}+a|z|^{2}\right)^{\frac{1}{4}}$
with $a\in(0,\infty)$. Let $g:\left[0,\infty\right)\rightarrow\left[0,\infty\right)$
be a differentiable increasing function such that $g'(N)$ is increasing,
${\displaystyle g(N)\leq\left(c\frac{g'(N)}{N^{2}}\right)^{\frac{1}{\beta}}},$
and $g''(N)<dg'(N)^{2}$ on $\{N\geq1\},$ for some constants $c,d\in(0,\infty).$
Then 
\[
\mu\left(|f|^{q}\left|log\left(\frac{|f|^{q}}{\mu|f|^{q}}\right)\right|^{\beta}\right)\leq C\mu|f|^{q}+D\mu|\triangledown f|^{q},
\]

for $C$ and $D$ positive constants and for $q\geq2.$
\end{thm}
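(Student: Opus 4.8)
The plan is to obtain the inequality as a special case of the abstract $\phi$-logarithmic Sobolev criterion of Theorem 7, applied with $U=g(N)$ and with the concave profile $\phi(x)=(1+x)^{\beta}$ supplied by Corollary 8. Two of the three hypotheses of Theorem 7 are immediate: $U=g(N)$ is locally Lipschitz and, being non-negative, is bounded below with $Z=\int e^{-g(N)}d\lambda<\infty$; and the classical Sobolev inequality
\[
\left(\int|f|^{q+\epsilon}d\lambda\right)^{\frac{q}{q+\epsilon}}\leq a\int|\nabla f|^{q}d\lambda+b\int|f|^{q}d\lambda
\]
holds on the stratified group $\mathbb{G}$ for a suitable $\epsilon>0$ by the standard sub-elliptic Sobolev embedding. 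Everything therefore reduces to the single remaining hypothesis, the U-bound
\[
\mu\left(|f|^{q}\left(\phi(g(N))+|\nabla g(N)|^{q}\right)\right)\leq A\,\mu|\nabla f|^{q}+B\,\mu|f|^{q},
\]
after which the conclusion follows verbatim from Theorem 7 upon replacing $|f|^{q}$ by $|f|^{q}/\mu|f|^{q}$.

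I would establish this U-bound by treating its two summands separately. The potential term is the easy one. Since $0<\beta\leq 1$, subadditivity of $t\mapsto t^{\beta}$ gives $\phi(g(N))=(1+g(N))^{\beta}\leq 1+g(N)^{\beta}$, and the structural hypothesis $g(N)\leq\left(c\,g'(N)/N^{2}\right)^{1/\beta}$ yields $g(N)^{\beta}\leq c\,g'(N)/N^{2}$. Hence
\[
\mu\left(|f|^{q}\phi(g(N))\right)\leq c\,\mu\!\left(|f|^{q}\frac{g'(N)}{N^{2}}\right)+\mu|f|^{q},
\]
and the first term on the right is controlled directly by the U-bound of Theorem 1. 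Its hypothesis $g''(N)\leq g'(N)^{3}N^{3}$ follows on $\{N\geq R_{0}\}$ from the present assumption $g''(N)<d\,g'(N)^{2}$ together with $g'$ increasing (since then $g'(N)N^{3}\geq g'(1)N^{3}\geq d$ once $N\geq R_{0}$), while the complementary annulus is absorbed by the usual splitting $\int=\int_{\{N<R_{0}\}}+\int_{\{N\geq R_{0}\}}$, the density being comparable to $\lambda$ on the bounded set $\{N<R_{0}\}$.

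The gradient term is the crux. Writing $\nabla g(N)=g'(N)\nabla N$ and $|\nabla g(N)|^{q}=g'(N)^{q}|\nabla N|^{q}$, I would prove the stand-alone gradient U-bound
\[
\int g'(N)^{q}|\nabla N|^{q}|f|^{q}d\mu\leq C\int|\nabla f|^{q}d\mu+D\int|f|^{q}d\mu
\]
by integrating by parts against $\nabla g(N)\,e^{-g(N)}=-\nabla e^{-g(N)}$. Expanding $\nabla\cdot\left(|\nabla g(N)|^{q-2}\nabla g(N)\right)$ produces a second-order contribution equal to $(q-1)\,g'(N)^{q-2}g''(N)|\nabla N|^{q}$ together with geometric terms carrying $\Delta N$ and the Hessian of $N$. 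The precise hypothesis $g''(N)<d\,g'(N)^{2}$ is exactly what bounds the former by $(q-1)d\,|\nabla g(N)|^{q}$; the geometric terms, estimated through Lemma 2 and the homogeneity of $N$ (so that $|\Delta N|\leq B|x|^{2}/N^{3}\leq B/N$ and $|\mathrm{Hess}\,N|\lesssim 1/N$), are of lower order in $N$ and, after a Cauchy--Schwarz split, are dominated by $\epsilon\,|\nabla g(N)|^{q}$ plus a multiple of $g'(N)/N^{2}$ which is again controlled by Theorem 1. The $f$-gradient term created by the integration by parts is absorbed by Young's inequality into $\epsilon\int|\nabla g(N)|^{q}|f|^{q}d\mu+C_{\epsilon}\int|\nabla f|^{q}d\mu$.

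The main obstacle is the absorption of the self-similar term $(q-1)d\int|\nabla g(N)|^{q}|f|^{q}d\mu$ back into the left-hand side: this closes only when the effective coefficient satisfies $(q-1)d+\epsilon<1$, whereas $d$ is merely assumed finite. The way out, which I expect to be the delicate point of the argument, is to localize to $\{N\geq R\}$ for $R$ large and to exploit that for the admissible potentials the ratio $g''/g'^{2}$ decays — for instance $g(N)=\alpha N^{p}$ gives $g''(N)/g'(N)^{2}=O(N^{-p})$ — so that the effective constant on $\{N\geq R\}$ can be made arbitrarily small; on the complementary region $\{1\leq N<R\}$ the weight $g'(N)^{q}|\nabla N|^{q}$ is bounded and contributes only a multiple of $\int|f|^{q}d\mu$. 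Combining the potential and gradient estimates yields the U-bound required by Theorem 7, and the stated Log$^{\beta}$-Sobolev inequality follows for all $q\geq 2$.
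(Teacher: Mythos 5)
Your overall strategy is the paper's: reduce to Theorem 7 with $\phi(x)=(1+x)^{\beta}$, control the potential term $\phi(g(N))$ via $g(N)\leq(c\,g'(N)/N^{2})^{1/\beta}$ and the U-bound of Theorem 1, and prove the gradient U-bound $\mu(|f|^{q}|\nabla g(N)|^{q})\leq C\mu|\nabla f|^{q}+D\mu|f|^{q}$ by integration by parts against $e^{-g(N)}$. The potential-term half is fine. The genuine gap is in the gradient half, and it lies exactly in your choice of vector field. You integrate by parts against $|\nabla g(N)|^{q-2}\nabla g(N)=g'(N)^{q-1}|\nabla N|^{q-2}\nabla N$ and dismiss the resulting geometric terms using only $|\Delta N|\leq B|x|^{2}/N^{3}$ and the homogeneity bound $|\mathrm{Hess}\,N|\lesssim 1/N$. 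That is not enough. For $q>2$ the divergence of your field contains $(q-2)g'(N)^{q-1}|\nabla N|^{q-4}\langle\nabla N,(\mathrm{Hess}\,N)\nabla N\rangle$, and the crude Hessian bound only gives the estimate $\lesssim g'(N)^{q-1}|\nabla N|^{q-2}/N\approx g'(N)^{q-1}|x|^{q-2}/N^{q-1}$. Against the main term $|\nabla g(N)|^{q}\approx g'(N)^{q}|x|^{q}/N^{q}$ the ratio is of order $N/(g'(N)|x|^{2})$, which blows up near the $z$-axis $\{x=0\}$, precisely where the sub-gradient $|\nabla N|\approx|x|/N$ degenerates; nor is this error dominated by the weight $g'(N)/N^{2}$ supplied by Theorem 1 (a Young split leaves a remainder of order $g'(N)^{q/2}/N^{q/2}$, which is not $O(g'(N)/N^{2})$ for growing $g'$). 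This is exactly why the paper does not use your field but takes $V=\nabla N\,\frac{|x|^{q-2}}{N^{q-2}}\,g'(N)^{q-1}$: then $\nabla\cdot V$ contains no Hessian at all --- only $\Delta N$, $|\nabla N|^{2}$, and the exact identity $\frac{x}{|x|}\cdot\nabla N=\frac{|x|^{3}}{N^{3}}$ of Lemma 2 --- so every error term carries the full factor $|x|^{q}/N^{q+1}$ and is $O(1/(Ng'(N)))$ relative to the main term. Your route could be repaired by proving the refined cancellation $|\langle\nabla N,(\mathrm{Hess}\,N)\nabla N\rangle|\lesssim|x|^{4}/N^{5}$, but that requires the same skew-symmetry computations that underlie Lemma 2 and appears nowhere in your proposal.

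Concerning the term $(q-1)g'(N)^{q-2}g''(N)|\nabla N|^{q}$: you are right that this is the delicate point, since pointwise absorption into the main term needs $(q-1)d$ (times the ratio of the constants $C/A$ from Lemma 2) to be strictly below $1$, while the hypothesis only provides a finite $d$. The paper passes over this silently (``all terms can be absorbed''), so your instinct is a fair criticism of the original argument as well. However, your proposed repair --- localizing to $\{N\geq R\}$ and invoking decay of $g''/g'^{2}$ --- imports an assumption that is not among the hypotheses: $g''<dg'^{2}$ with $g'$ increasing does not force $g''/g'^{2}$ to tend to zero uniformly (it can repeatedly return to level $d$ on sparse intervals going to infinity), so the localized constant need not become small. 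Your fix does suffice for the intended application, Corollary 11 with $g=\alpha N^{p}$, where $g''/g'^{2}=O(N^{-p})$; but as a proof of the theorem as stated it trades one unjustified absorption for an extra hypothesis, rather than closing the argument.
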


\begin{proof}
On $\{N\geq1\},$ $g''(N)<dg'(N)^{2}\leq g'(N)^{3}N^{3},$ so the
condition of Theorem 1 is satisfied. Thus, on $\{N\geq1\},$ we have
the U-bound (\ref{eq:u}):
\[
\mu\left(\frac{g'(N)}{N^{2}}|f|^{q}\right)\leq C\mu|\triangledown f|^{q}+D\mu|f|^{q}.
\]
By the condition ${\displaystyle g(N)\leq\left(c\frac{g'(N)}{N^{2}}\right)^{\frac{1}{\beta}},}$
we obtain ${\displaystyle \phi(g(N))=(1+g(N))^{\beta}\leq\frac{g'(N)}{N^{2}}}$
on $\{N\geq1\}$. Hence, since $g(N)$ is increasing and using the
U-bound, we have
\begin{equation}
\begin{array}{cl}
{\displaystyle \mu\left(\phi(g(N))|f|^{q}\right)} & {\displaystyle \leq\int_{\{N\geq1\}}\left(\frac{g'(N)}{N^{2}}|f|^{q}\right)d\mu+\int_{\{N<1\}}\phi(g(N))|f|^{q}d\mu}\\
\\
 & \leq{\displaystyle \int_{\{N\geq1\}}\left(\frac{g'(N)}{N^{2}}|f|^{q}\right)d\mu+\int_{\{N<1\}}(1+g(1))^{\beta}|f|^{q}d\mu}\\
\\
 & {\displaystyle \leq C\mu|\triangledown f|^{q}+D\mu|f|^{q}.}
\end{array}\label{eq:19-1}
\end{equation}

In order to use Theorem 7, it remains to prove:
\begin{equation}
\mu\left(|f|^{q}|\triangledown g(N)|^{q}\right)\leq C\mu|f|^{q}+D\mu|\triangledown f|^{q}.\label{eq:20-1}
\end{equation}

On $\{N<1\},$ since $g'(N)$ is increasing and using (\ref{eq:1-1}),
\[
\int_{\{N<1\}}\left(|f|^{q}|\triangledown g(N)|^{q}\right)d\mu=\int_{\{N<1\}}\left(|f|^{q}|g'(N)\triangledown N|^{q}\right)d\mu\leq C^{\frac{q}{2}}\int_{\{N<1\}}|f|^{q}|g'(1)|^{q}d\mu.
\]
We now need to consider $\{N\geq1\}:$
\begin{equation}
\int|f|^{q}\left(\triangledown g(N)\cdot V-\triangledown\cdot V\right)d\mu=\int\triangledown|f|^{q}\cdot Vd\mu\leq\frac{\epsilon}{p}\int|f|^{q}|V|^{p}d\mu+\frac{1}{\epsilon^{\frac{q}{p}}}q^{q-1}\int|\triangledown f|^{q}d\mu,\label{eq:21-1}
\end{equation}
where the last inequality uses $\text{\ensuremath{{\displaystyle ab \leq\epsilon\frac{a^{p}}{p} + \frac{b^{q}}{\epsilon^{\frac{q}{p}}q} ,}}}$
where $a=|f|^{q-1}|V|,$ and $b=q|\triangledown f|.$ Let ${\displaystyle V=\triangledown N\frac{|x|^{q-2}}{N^{q-2}}g'(N)^{q-1}.}$
Since $\triangledown g(N)=g'(N)\triangledown N,$ then $\triangledown g(N)\cdot V=|\triangledown g(N)|^{q},$
which is the term on the left hand side of (\ref{eq:20-1}). Using
the inequality (\ref{eq:1-1}) on the first term on the right hand
side of (\ref{eq:21-1}) we get
\[
\begin{array}{cl}
{\displaystyle \frac{\epsilon}{p}\int|f|^{q}|V|^{p}d\mu} & {\displaystyle =\frac{\epsilon}{p}\int|\triangledown N|^{p}|f|^{q}\frac{|x|^{(q-2)p}g'(N)^{q}}{N^{(q-2)p}}d\mu}\\
\\
 & \leq{\displaystyle \frac{\epsilon C^{\frac{p}{2}}}{p}\int|f|^{q}\frac{|x|^{q}g'(N)^{q}}{N^{q}}d\mu}
\end{array}
\]

which can subtracted from the left hand side of (\ref{eq:21-1}) since
by choosing $\epsilon$ small enough and noting that using (\ref{eq:1-1}),
one has
\[
\triangledown g(N)\cdot V=|\triangledown N|^{2}\frac{|x|^{q-2}}{N^{q-2}}g'(N)^{q}\geq A\frac{|x|^{q}}{N^{q}}g'(N)^{q}.
\]

It remains to compute $\triangledown\cdot V.$ Using ${\displaystyle |\Delta N|\leq B\frac{|x|^{2}}{N^{3}},}$
(\ref{eq:2-1}), and ${\displaystyle \frac{x}{|x|}\cdot\nabla N=\frac{|x|^{3}}{N^{3}},}$
(\ref{eq:3}), we have
\[
\triangledown\cdot V=\Delta N\frac{|x|^{q-2}}{N^{q-2}}g'(N)^{q-1}+(q-2)\frac{|x|^{q}g'(N)^{q-1}}{N^{q+1}}-(q-2)\frac{|\triangledown N|^{2}|x|^{q-2}g'(N)^{q-1}}{N^{q-1}}
\]
\[
+(q-1)g'(N)^{q-2}g''(N)\frac{|x|^{q-2}|\triangledown N|^{2}}{N^{q-2}}
\]
and hence

\[
|\triangledown\cdot V|\leq B\frac{|x|^{q}}{N^{q+1}}g'(N)^{q-1}+(q-2)\frac{|x|^{q}g'(N)^{q-1}}{N^{q+1}}+(q-2)\frac{C|x|^{q}g'(N)^{q-1}}{N^{q+1}}+C(q-1)g'(N)^{q-2}g''(N)\frac{|x|^{q}}{N^{q}}.
\]
 All terms can be absorbed by the first term in (\ref{eq:21-1}).
Using (\ref{eq:19-1}) and (\ref{eq:20-1}), the condition of Theorem
7 is satisfied, and we obtain Log$^{\beta}$-Sobolev inequality:
\[
\mu\left(|f|^{q}\left|log\left(\frac{|f|^{q}}{\mu|f|^{q}}\right)\right|^{\beta}\right)\leq C\mu|f|^{q}+D\mu|\triangledown f|^{q}
\]
 for $C$ and $D$ positive constants. 
\end{proof}
\begin{cor}
Let $\mathbb{G}$ be a step-two Carnot group and $N=\left(|x|^{4}+a|z|^{2}\right)^{\frac{1}{4}}$
with $a\in(0,\infty)$ . Let the probability measure be ${\displaystyle d\mu=\frac{e^{-\beta N^{p}}}{Z}d\lambda,}$
where $Z$ is the normalization constant. Then, for $p\geq4$ and
${\displaystyle 0<\beta\leq\frac{p-3}{p},}$\[
\mu\left(|f|^{q}\left|log\left(\frac{|f|^{q}}{\mu|f|^{q}}\right)\right|^{\beta}\right)\leq C\mu|f|^{q}+D\mu|\triangledown f|^{q},
\]
 for $C$ and $D$ positive constants and for $q\geq2.$ 
\end{cor}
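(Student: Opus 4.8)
The plan is to read this corollary as a direct specialization of Theorem 11 to the power weight $g(N)=\alpha N^{p}$, where I write the constant in the exponent of the density as $\alpha>0$ (the statement overloads $\beta$ for both this constant and the Log-Sobolev power; I resolve the clash by calling the exponential coefficient $\alpha$). Once $g$ is fixed, the entire task reduces to checking the three structural hypotheses of Theorem 11: that $g$ is differentiable and increasing, that $g'$ is increasing, that $g(N)\leq\left(c\,g'(N)/N^{2}\right)^{1/\beta}$ on $\{N\geq1\}$, and that $g''(N)<d\,g'(N)^{2}$ on $\{N\geq1\}$ for suitable constants $c,d$. After that the conclusion is immediate from Theorem 11.

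First I would record $g'(N)=\alpha p N^{p-1}$ and $g''(N)=\alpha p(p-1)N^{p-2}$; since $p\geq4>1$, both are positive for $N>0$, so $g$ is increasing and $g'$ is increasing, settling the first two hypotheses at once. The substantive step is the growth condition. Computing $g'(N)/N^{2}=\alpha p N^{p-3}$, the inequality $g(N)\leq\left(c\,g'(N)/N^{2}\right)^{1/\beta}$ is equivalent, after raising to the power $\beta$, to $\alpha^{\beta}N^{p\beta}\leq c\,\alpha p\,N^{p-3}$. On $\{N\geq1\}$ the monomial bound $N^{p\beta}\leq N^{p-3}$ holds exactly when $p\beta\leq p-3$, i.e. $\beta\leq\frac{p-3}{p}$, which is precisely the standing hypothesis; the surviving constants are then absorbed by taking $c\geq\alpha^{\beta-1}/p$. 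This is the step where both the ceiling on $\beta$ and the requirement $p>3$ (hence the stated $p\geq4$, ensuring the range $0<\beta\leq\frac{p-3}{p}$ is nonempty) actually enter. Finally, the curvature-type condition $g''(N)<d\,g'(N)^{2}$ becomes $\alpha p(p-1)N^{p-2}<d\,\alpha^{2}p^{2}N^{2p-2}$, i.e. $(p-1)<d\,\alpha p\,N^{p}$, which holds for all $N\geq1$ once $d\geq(p-1)/(\alpha p)$.

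I do not expect a genuine obstacle here, as the proof is pure exponent bookkeeping rather than a new estimate: the content lies entirely in the monomial matching $N^{p\beta}\leq N^{p-3}$, which is what pins the admissible Log-Sobolev power to the interval $\bigl(0,\tfrac{p-3}{p}\bigr]$ and forces $p>3$. With all three hypotheses of Theorem 11 verified, that theorem delivers the inequality $\mu\bigl(|f|^{q}\,|\log(|f|^{q}/\mu|f|^{q})|^{\beta}\bigr)\leq C\mu|f|^{q}+D\mu|\triangledown f|^{q}$ for $q\geq2$, which is the assertion.
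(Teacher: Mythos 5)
Your proposal is correct and is exactly the argument the paper intends: the corollary is stated without proof as a direct specialization of Theorem 11, and your verification of its hypotheses for $g(N)=\alpha N^{p}$ (monotonicity of $g$ and $g'$, the monomial matching $N^{p\beta}\leq N^{p-3}$ giving $\beta\leq\frac{p-3}{p}$, and $g''<d\,g'^{2}$ on $\{N\geq1\}$) is the bookkeeping the paper leaves implicit. Your renaming of the exponential coefficient to $\alpha$ also correctly resolves the paper's notational overload of $\beta$, consistent with the introduction.
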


\section{Appendix: Proof of Lemma 2}
\begin{proof}
We first compute $\triangledown N=(X_{i}N)_{i=1,...,n}.$

\[
X_{i}N=N^{-3}\left(|x|^{2}x_{i}+\frac{a}{4}\sum_{k=1}^{m}\sum_{l=1}^{n}\Lambda_{il}^{\left(k\right)}x_{l}z_{k}\right).
\]
Therefore,
\begin{equation}
\begin{array}{ll}
\left|\nabla N\right|^{2} & ={\displaystyle N^{-6}\left(|x|^{6}+\frac{a}{2}\sum_{i=1}^{n}\sum_{k=1}^{m}\sum_{l=1}^{n}\Lambda_{il}^{\left(k\right)}|x|^{2}x_{i}x_{l}z_{k}+\sum_{i=1}^{n}\frac{a^{2}}{16}\sum_{k,k'=1}^{m}\sum_{l,l'=1}^{n}\Lambda_{il}^{\left(k\right)}\Lambda_{il'}^{\left(k'\right)}x_{l}x_{l'}z_{k}z_{k'}\right)}\\
 & {\displaystyle =N^{-6}\left(|x|^{6}+\frac{a^{2}}{16}\sum_{i=1}^{n}\sum_{k,k'=1}^{m}\sum_{l,l'=1}^{n}\Lambda_{il}^{\left(k\right)}\Lambda_{il'}^{\left(k'\right)}x_{l}x_{l'}z_{k}z_{k'}\right)}\\
 & ={\displaystyle \frac{|x|^{2}}{N^{2}}N^{-4}\left(|x|^{4}+\frac{a^{2}}{16}\sum_{i=1}^{n}\sum_{k,k'=1}^{m}\sum_{l,l'=1}^{n}\Lambda_{il}^{\left(k\right)}\Lambda_{il'}^{\left(k'\right)}\frac{x_{l}}{|x|}\frac{x_{l'}}{|x|}z_{k}z_{k'}\right),}
\end{array}\label{eq:4-1}
\end{equation}
where we used that for each skew-symmetric matrix $\Lambda^{(k)},$
all $k\in\{1,...,m\},$ we have that
\[
\sum_{l=1}^{n}\sum_{i=1}^{n}\Lambda_{il}^{\left(k\right)}x_{l}x_{i}=0.
\]
From (\ref{eq:4-1}), that with some constants $A,C\in(0,\infty),$
we have
\[
A\frac{|x|^{2}}{N^{2}}\leq\left|\nabla N\right|^{2}\leq C\frac{|x|^{2}}{N^{2}}.
\]
By choosing $a\in(0,\infty)$ sufficiently small, we can ensure that
$C\leq1.$ We note that using antisymmetry of matrices $\Lambda_{il}^{\left(k\right)}$
we get
\[
\frac{x}{|x|}\cdot\nabla N=\sum_{i=1}^{n}\frac{x_{i}}{|x|}N^{-3}\left(|x|^{2}x_{i}+\frac{a}{4}\sum_{k=1}^{m}\sum_{l=1}^{n}\Lambda_{il}^{\left(k\right)}x_{l}z_{k}\right)%=\frac{|x|^{3}}{N^{3}}.
\]

Next we compute
\[
\begin{array}{cl}
X_{i}^{2}N & ={\displaystyle \left(\frac{\partial}{\partial x_{i}}+\frac{1}{2}\sum_{k=1}^{m}\sum_{l=1}^{n}\Lambda_{il}^{\left(k\right)}x_{l}\frac{\partial}{\partial z_{k}}\right)\left(N^{-3}\left(|x|^{2}x_{i}+\frac{a}{4}\sum_{k=1}^{m}\sum_{l=1}^{n}\Lambda_{il}^{\left(k\right)}x_{l}z_{k}\right)\right)}\\
 & {\displaystyle =-3\left(N^{-7}\left(|x|^{2}x_{i}+\frac{a}{4}\sum_{k=1}^{m}\sum_{l=1}^{n}\Lambda_{il}^{\left(k\right)}x_{l}z_{k}\right)^{2}\right)}\\
 & {\displaystyle +\left(N^{-3}\left(|x|^{2}+2x_{i}^{2}+\frac{a}{4}\sum_{k=1}^{m}\sum_{l=1}^{n}\Lambda_{il}^{\left(k\right)}\delta_{il}z_{k}\right)\right)}\\
 & {\displaystyle +\left(N^{-3}\left(\frac{1}{2}\sum_{k=1}^{m}\sum_{l=1}^{n}\Lambda_{il}^{\left(k\right)}x_{l}\frac{a}{4}\sum_{k'=1}^{m}\sum_{l'=1}^{n}\Lambda_{il'}^{\left(k'\right)}x_{l'}\delta_{kk'}\right)\right).}
\end{array}
\]

Hence we obtain
\[
\begin{array}{cl}
\Delta N & {\displaystyle =\sum_{i=1}^{n}X_{i}^{2}N}\\
 & {\displaystyle =-3\left(N^{-7}\left(|x|^{6}+2\frac{a}{4}\sum_{k=1}^{m}\sum_{i=1}^{n}\sum_{l=1}^{n}\Lambda_{il}^{\left(k\right)}|x|^{2}x_{i}x_{l}z_{k}\right)\right.}\\
 & {\displaystyle +\left.\frac{a^{2}}{16}\sum_{k=1}^{m}\sum_{l=1}^{n}\sum_{k'=1}^{m}\sum_{l'=1}^{n}\sum_{i=1}^{n}\Lambda_{il}^{\left(k\right)}\Lambda_{il'}^{\left(k'\right)}x_{l'}x_{l}z_{k}z_{k'}\right)}\\
 & {\displaystyle +\left(N^{-3}\left((n+2)|x|^{2}+\frac{a}{4}\sum_{k=1}^{m}\sum_{i=1}^{n}\sum_{l=1}^{n}\Lambda_{il}^{\left(k\right)}\delta_{il}z_{k}\right)\right)}\\
 & {\displaystyle +\left(N^{-3}\left(\frac{1}{2}\sum_{k=1}^{m}\sum_{l=1}^{n}\sum_{i=1}^{n}\Lambda_{il}^{\left(k\right)}x_{l}\frac{a}{4}\sum_{k'=1}^{m}\sum_{l'=1}^{n}\Lambda_{il'}^{\left(k'\right)}x_{l'}\delta_{kk'}\right)\right).}
\end{array}
\]

which after simplifications yields
\[
\begin{array}{cl}
\Delta N & {\displaystyle =\sum_{i=1}^{n}X_{i}^{2}N}\\
 & {\displaystyle =-3\left(N^{-7}\left(|x|^{6}+\frac{a^{2}}{16}\sum_{k=1}^{m}\sum_{l=1}^{n}\sum_{k'=1}^{m}\sum_{l'=1}^{n}\sum_{i=1}^{n}\Lambda_{il}^{\left(k\right)}\Lambda_{il'}^{\left(k'\right)}x_{l'}x_{l}z_{k}z_{k'}\right)\right)}\\
 & {\displaystyle +\left(N^{-3}\left((n+2)|x|^{2}\right)\right)}\\
 & {\displaystyle +\left(N^{-3}\left(\frac{a}{8}\sum_{k=1}^{m}\sum_{l=1}^{n}\sum_{l'=1}^{n}\sum_{i=1}^{n}\Lambda_{il}^{\left(k\right)}\Lambda_{il'}^{\left(k\right)}x_{l'}x_{l}\right)\right).}
\end{array}
\]

Thus we get 
\[
\begin{array}{cl}
\Delta N & {\displaystyle =\frac{|x|^{2}}{N^{3}}\left[-3\left(N^{-4}\left(|x|^{4}+\frac{a^{2}}{16}\sum_{k=1}^{m}\sum_{l=1}^{n}\sum_{k'=1}^{m}\sum_{l'=1}^{n}\sum_{i=1}^{n}\Lambda_{il}^{\left(k\right)}\Lambda_{il'}^{\left(k'\right)}\frac{x_{l}}{|x|}\frac{x_{l'}}{|x|}z_{k}z_{k'}\right)\right)\right.}\\
 & +\left.\left(n+2+\frac{a}{8}\sum_{k=1}^{m}\sum_{l=1}^{n}\sum_{l'=1}^{n}\sum_{i=1}^{n}\Lambda_{il}^{\left(k\right)}\Lambda_{il'}^{\left(k\right)}\frac{x_{l}}{|x|}\frac{x_{l'}}{|x|}\right)\right].
\end{array}
\]

which can be represented as follows
\[
\begin{array}{cl}
\Delta N & {\displaystyle =(n-1)\frac{|x|^{2}}{N^{3}}}\\
 & {\displaystyle +\frac{|x|^{2}}{N^{3}}\left[-3\left(N^{-4}\left(-a|z|^{2}+\frac{a^{2}}{16}\sum_{k=1}^{m}\sum_{l=1}^{n}\sum_{k'=1}^{m}\sum_{l'=1}^{n}\sum_{i=1}^{n}\Lambda_{il}^{\left(k\right)}\Lambda_{il'}^{\left(k'\right)}\frac{x_{l}}{|x|}\frac{x_{l'}}{|x|}z_{k}z_{k'}\right)\right)\right.}\\
 & {\displaystyle \left.\frac{a}{8}\sum_{k=1}^{m}\sum_{l=1}^{n}\sum_{l'=1}^{n}\sum_{i=1}^{n}\Lambda_{il}^{\left(k\right)}\Lambda_{il'}^{\left(k\right)}\frac{x_{l}}{|x|}\frac{x_{l'}}{|x|}\right].}
\end{array}
\]

Hence, there exists a constant $B\in(0,\infty)$ such that
\[
|\Delta N|\leq B\frac{|x|^{2}}{N^{3}}
\]

\textbf{Remark:} If $a>0$ is small, $\Delta N\geq0$. For large $a$, in some
directions $\Delta N$ can be negative. 
\end{proof}


\begin{thebibliography}{10}
\bibitem{key-1}D. Bakry and M. Émery. \textit{Diffusions hypercontractive.}
In Séminaire de Probabilités, XIX, 1983/84, number 1123 in Lecture
Notes in Math., pages 177--206. Springer, Berlin, 1985. 

\bibitem{key-21}D. Bakry, F. Baudoin, M. Bonnefont, D. Chafai. \textit{On
gradient bounds for the heat kernel on the Heisenberg group.} J. Funct.
Anal. 255, 1905--1938 (2008).

\bibitem{key-29}F. Barthe, P. Cattiaux, and C. Roberto. \textit{Interpolated
inequalities between exponential and Gaussian, Orlicz hypercontractivity
and isoperimetry. }Rev. Mat. Iberoamericana 22 (2006), no. 3, 993--1067.

\bibitem{key-2}S. Bobkov and M. Ledoux. \textit{From Brunn-Minkowski
to Brascamp-Lieb and to logarithmic Sobolev inequalities.} Geom. Funct.
Anal., 10(5):1028--1052, 2000.

\bibitem{key-3}S. Bobkov and B. Zegarli\'{n}ski. \textit{Entropy
bounds and isoperimetry.} Mem. Amer. Math.Soc., 176(829), 2005. 

\bibitem{key-12}Th. Bodineau and B. Helffer. \textit{On Log-Sobolev
inequalities for unbounded spin systems.} J. Funct. Anal. 166 (1999),
168-178. 

\bibitem{key-4}A. Bonfiglioli, E. Lanconelli, and F. Uguzzoni. \textit{Stratified
Lie Groups and Potential Theory for their Sub-Laplacians.} Springer
Monographs in Mathematics. Springer, 2007. 

\bibitem{key-1}L. Capogna, D. Danielli, S.D. Pauls, and J. Tyson.
\textit{An introduction to the Heisenberg group and the sub-Riemannian
isoperimetric problem.} Progress in Mathematics, 259. Birkhäuser Verlag,
Basel, 2007. xvi+223 pp. ISBN: 978-3- 7643-8132-5.

\bibitem{key-30}M. Chatzakou, S. Federico, B. Zegarlinski. \textit{q-Poincaré
inequalities on Carnot Groups with a filiform Lie algebra.} arXiv:2007.04689v2
{[}math.FA{]}. 

\bibitem{key-24}W.S. Cohn, G. Lu, and P. Wang. \textit{Sub-elliptic
global high order Poincaré inequalities in stratified Lie groups and
applications.} (English summary) J. Funct. Anal. 249 (2007), no. 2,
393--424.

\bibitem{key-22}B.K. Driver and T. Melcher.\textit{ Hypoelliptic
heat kernel inequalities on Lie groups.} Stoch. Process. Appl. 118,
368--388 (2008).

\bibitem{key-17}F. Gong and L. Wu. \textit{Spectral gap of positive
operators and applications.} J. Math. Pures Appl. (9) 85 (2006), no.
2, 151-191. 

\bibitem{key-5}L. Gross. \textit{Logarithmic Sobolev inequalities.
}Amer. J. Math., 97:1061--1083, 1975. 

\bibitem{key-13}A. Guionnet and B. Zegarli\'{n}ski. \textit{Lectures
on logarithmic Sobolev inequalities.} Séminaire de Probabilités, XXXVI,
1-134, Lecture Notes in Math., 1801, Springer, Berlin, 2003.

\bibitem{key-6}W. Hebisch and B. Zegarli\'{n}ski. \textit{Coercive
inequalities on metric measure spaces.} J. Funct. Anal., 258:814--851,
2010. 

\bibitem{key-7}J. Inglis. \textit{Coercive Inequalities for Generators
of Hörmander Type.} Doctor of Philosophy of the University of London
and the Diploma of Imperial College, Department of Mathematics Imperial
College, 2010.

\bibitem{key-8}J. Inglis, V. Kontis, B. Zegarli\'{n}ski. \textit{From
U-Bounds to Isoperimetry with Applications.} J. Funct. Anal., 260
(2011) 76-116.

\bibitem{key-18}J. Jerison. \textit{The Poincaré inequality for vector
fields satisfying Hörmander's condition.} Duke Math. J. 53 (1986),
no. 2, 503-523. 

\bibitem{key-2}E. Le Donne. \textit{A Primer on Carnot Groups: Homogenous
Groups, Carnot-Carathéodory Spaces, and Regularity of Their Isometries.}
Anal. Geom. Metr. Spaces 2017; 5:116--137.

\bibitem{key-9}M. Ledoux. \textit{A simple analytic proof of an inequality
by P. Buser.} Proc. Amer. Math. Soc., 121(3):951--959, 1994. 

\bibitem{key-20} H.Q. Li. \textit{Estimation optimale du gradient
du semi-groupe de la chaleur sur le groupe de Heisenberg.} J. Funct.
Anal. 236, 369--394 (2006).

\bibitem{key-23}X. Li, C.Z. Lu, and H.L. Tang. \textit{Poincaré inequalities
for vector fields satisfying Hörmander\textquoteright s condition
in variable exponent Sobolev spaces.} Acta Math. Sin. (Engl. Ser.)
31 (2015), no. 7, 1067--1085.

\bibitem{key-25}G. Lu. \textit{Local and global interpolation inequalities
on the Folland-Stein Sobolev spaces and polynomials on stratified
groups.} (English summary) Math. Res. Lett. 4 (1997), no. 6, 777--790.

\bibitem{key-26}G. Lu. \textit{Polynomials, higher order Sobolev
extension theorems and interpolation inequalities on weighted Folland-Stein
spaces on stratified groups.} (English summary) Acta Math. Sin.(Engl.
Ser.) 16 (2000), no. 3, 405--444.

\bibitem{key-27}G. Lu and R.L. Wheeden. \textit{High order representation
formulas and embedding theorems on stratified groups and generalizations.}
Studia Math. 142 (2000), no. 2, 101--133. (Reviewer: G. B. Folland).

\bibitem{key-28}G. Lu and R.L. Wheeden, R. \textit{Simultaneous representation
and approximation for- mula and high-order Sobolev embedding theorems
on stratified groups.} (English summary) Constr. Approx. 20 (2004),
no. 4, 647--668.

\bibitem{key-14}P. \L ugiewicz and B. Zegarli\'{n}ski. \textit{Coercive
Inequalities for Hörmander Type Generators in Infinite Dimensions.
}J. Funct. Anal. 247 (2007), 438-476.

\bibitem{key-32}T. Mechler. \textit{Hypoelliptic heat kernel inequalities
on Lie groups. }Stochastic Process. Appl. 118 (2008), no.3, 368-388
(Reviewer: T. Coulhon).

\bibitem{key-15}J. Rosen, \textit{Sobolev inequalities for weight
spaces and supercontractivity. }Trans. Amer. Math. Soc. 222 (1976),
367-376.

\bibitem{key-16}C. Roberto and B. Zegarli\'{n}ski. \textit{Orlicz-Sobolev
inequalities for sub-Gaussian measures and ergodicity of Markov semi-groups.}
J. Func. Anal. 243. (2006) 28-66.

\bibitem{key-69-1}M. Ruzhansky and N. Yessirkegenov. \textit{Factorization
and Hardy-Rellich Inequalities on Stratified Groups.} arXiv:1706.05108
(2017).

\bibitem{key-10}E. Stein. \textit{Harmonic Analysis: Real-Variable
Methods, Orthogonality, and Oscillatory Integrals.} Princeton University
Press, Princeton, New Jersey, 1993.

\bibitem{key-19}N.T. Varopoulos, L. Saloff-Coste and T. Coulhon.\textit{
Analysis and Geometry on Groups.} Cambridge Tracts in Mathematics,
100, Cambridge University Press, Cambridge (1992).

\bibitem{key-11}N. Yosida. \textit{The log-Sobolev inequality for
weakly coupled lattice fields. }Probab. Theor. Relat. Field 115 (1999)
1-40. 

\bibitem{key-67}B. Zegarli\'{n}ski. \textit{Entropy bounds for Gibbs
measures with non-Gaussian tails.} J. Funct. Anal. 187 (2) (2001)
368-395.
\end{thebibliography}
\end{document}